\author{Matteo Tamiozzo}
\title{The tame Hilbert symbol via K-theory and central extensions}
\newtheorem{teo}[subsection]{Theorem}     
\theoremstyle{plain}                    
\newtheorem{prop}[subsection]{Proposition}    
\newtheorem{corol}[subsection]{Corollary}     
\newtheorem{lem}[subsection]{Lemma}         
\theoremstyle{definition}               
\newtheorem{defin}[subsection]{Definition}
\theoremstyle{remark}                   
\newtheorem{rem}[subsection]{Remark}      
\newtheorem{notat}[subsection]{Notation}
\newcommand{\Addr}{{\bigskip
\footnotesize
\textsc{Department of Mathematics, Imperial College London, London SW7 2AZ, UK}

\textit{Email address:} \texttt{m.tamiozzo@imperial.ac.uk}
}}
\numberwithin{equation}{subsection}
\begin{document}

\date{}

\begin{abstract}
Let $K$ be a mixed characteristic local field whose residue field has cardinality $q$, and let $n$ be an integer dividing $q-1$. In the first part of this document, inspired by \cite{cla12}, we construct a $K$-theoretic enhancement of the $n$-th power residue symbol $K^\times \times K^\times \rightarrow \bm{\mu}_n$. In the second part we construct central extensions of $GL_m(K)$ by $\bm{\mu}_n$ and we express the $n$-th power residue symbol in terms of a symbol defined using the extension obtained for $m=1$, generalising \cite{adck89, pr08}. Our constructions both rely on the study of finite free pointed $\bm{\mu}_n$-sets.
\end{abstract}

\maketitle

\tableofcontents

\section{Introduction}
\subsection{The tame symbol in equal characteristic} Let $k$ be a finite field and $X$ a smooth, projective curve over $k$. We will denote by $|X|$ the set of closed points of $X$. For $x\in|X|$ we let $\mathcal{O}_{x}$ be the completed local ring at $x$, with maximal ideal $\mathrm{m}_{x}$ and residue field $k(x)$; let $K_{x}$ be the fraction field of $\mathcal{O}_{x}$ and $v_{x}$ the valuation on $K_{x}$. The field extension $k(x)/k$ is finite, hence we have a norm map $N_{k(x)/k}: k(x)^{\times}\rightarrow k^{\times}$. The tame residue symbol at $x$ is the map
\begin{align}\label{eq-hilbsym}
(\cdot, \cdot)_{x}:K_{x}^{\times}\times K_{x}^{\times} & \rightarrow k^{\times}\\
\nonumber (f, g) & \mapsto N_{k(x)/k}\left((-1)^{v_x(f)v_x(g)}\frac{f^{v_x(g)}}{g^{v_x(f)}} \pmod{\mathrm{m}_x} \right);
\end{align}
notice that $\displaystyle \frac{f^{v_{x}(g)}}{g^{v_{x}(f)}}$ has valuation zero, hence its reduction modulo $\mathrm{m}_{x}$ makes sense and is a unit, so that $(f, g)_{x}$ does indeed belong to $k^{\times}$. For non-zero elements $f, g$ in the function field of $X$ the symbol $(f, g)_{x}$ equals one for all but finitely many $x\in |X|$; Weil's reciprocity law states that
$$
\prod_{x\in|X|}(f, g)_{x}=1.
$$

\subsection{} One can look at the symbol $(\cdot, \cdot)_{x}$ from (at least) two different points of view: on the one hand it is a Steinberg symbol, hence it factors through a map $K_{2}(K_{x})\rightarrow k^{\times}$, which can be described purely in $K$-theoretic terms, as we will later recall. Better, one can define a family of maps $K_{i}(K_{x})\rightarrow K_{i-1}(k)$ for $i\geq 1$, specialising to the tame symbol for $i=2$. The reciprocity law holds true for every $i$ \cite{cla20}; for example, for $i=1$ it recovers the product formula.

On the other hand, Arbarello, De Concini, Kac \cite{adck89} and Pablos Romo \cite{pr02} gave a different construction of (a symbol which is very close to) the tame symbol: in a nutshell, they consider the group $GL(K_{x},\ \mathcal{O}_{x})$ of $k$-linear automorphisms $f : K_{x}\rightarrow K_{x}$ satisfying the additional assumption that $f(\mathcal{O}_{x})$ and $\mathcal{O}_{x}$ are commensurable, i. e. $(f(\mathcal{O}_{x})+\mathcal{O}_{x})/(\mathcal{O}_{x}\cap f(\mathcal{O}_{x}))$ is a finite dimensional $k$-vector space. For such an $f$ one can define the determinant of $f(\mathcal{O}_{x})$ {\it relative to} $\mathcal{O}_{x}$; it is a one-dimensional $k$-vector space, denoted by $\det(\mathcal{O}_{x}\mid f(\mathcal{O}_{x}))$. For $f,\ g\in GL(K_{x},\ \mathcal{O}_{x})$ there is a canonical isomorphism $\det(\mathcal{O}_{x}\mid f(\mathcal{O}_{x}))\otimes_{k}\det(\mathcal{O}_{x}\mid g(\mathcal{O}_{x}))\rightarrow\det(\mathcal{O}_{x}\mid fg(\mathcal{O}_{x}))$, allowing to define a group structure on the set $\widetilde{GL}(K_{x},\ \mathcal{O}_{x})=\{(f,\ s),\ f\in GL(K_{x},\ \mathcal{O}_{x}),\ s\in\det(\mathcal{O}_{x}\mid f(\mathcal{O}_{x}))\smallsetminus \{0\}\}$. One obtains a central extension of groups
\begin{equation}\label{eqchar-metapl}
1\rightarrow k^{\times}\rightarrow\widetilde{GL}(K_{x},\ \mathcal{O}_{x})\rightarrow GL(K_{x},\ \mathcal{O}_{x})\rightarrow 1.
\end{equation}
Now any $f, g\in K_{x}^{\times}$ give rise to commuting elements in $GL(K_{x},\ \mathcal{O}_{x})$; it follows that, chosen arbitrary lifts $\tilde{f},\tilde{g}\in \widetilde{GL}(K_{x},\ \mathcal{O}_{x})$ of $f, g$, the commutator $[\tilde{f},\tilde{g}]=\tilde{f}\tilde{g}\tilde{f}^{-1}\tilde{g}^{-1}$ belongs to $k^{\times}$ and is independent of the choice of the lifts. We therefore obtain a well-defined element $[\tilde{f},\tilde{g}]\in k^{\times}$ which can be shown to be equal, up to a sign, to the symbol $(f, g)_{x}$ \cite[Proposition 5.1]{pr02}. Among other things, this point of view on the tame symbol can be used to prove Weil's reciprocity law, as first noticed in \cite{adck89}; see also \cite{apr04}. Interestingly, Clausen's $K$-theoretic approach to reciprocity mentioned above (see also \cite{cla17}) seems to be at least vaguely reminiscent of some arguments in \cite{adck89}, \cite{apr04}.

\subsection{The tame symbol in mixed characteristic.} In this document we are interested in tame Hilbert symbols for local fields of mixed characteristic. Let $K$ be a local field of characteristic $0$ and residue characteristic $p$; let $\mathcal{O}$ be the ring of integers of $K$, with uniformiser $\pi$ and residue field $k=\mathcal{O}/\pi$ of cardinality $q$. We denote by $v$ the valuation on $K$.

For an integer $n\mid q-1$ the group $\bm{\mu}_n$ of $n$-th roots of unity is contained in $K^{\times}$. Reduction modulo $\pi$ identifies $\bm{\mu}_{n}$ with the group of $n$-th roots of unity in $k$. The $n$-th power residue symbol, analogous to the symbol \eqref{eq-hilbsym} in this setting, is defined as
\begin{align*}
(\cdot, \cdot)_{n}& :K^{\times}\times K^{\times}\rightarrow\bm{\mu}_{n}\\
(a, b)& \mapsto \left((-1)^{v(a)v(b)}\frac{a^{v(b)}}{b^{v(a)}}\ \pmod{\pi}\right)^{\frac{q-1}{n}}.
\end{align*}
The definition resembles the one in the equal characteristic case, except that the target is not any more the group of units of a field, and the norm map is replaced by the $\displaystyle \frac{q-1}{n}$-power map. This makes it a priori unclear how to upgrade the power residue symbol to a $K$-theoretic map. On another note, the second construction of the tame symbol in equal characteristic discussed above relies on the fact that in that setting the relevant field $K_{x}$ is a vector space over the field $k$; no such a base field is available in mixed characteristic.

\subsection{} The aim of this note is to explain why all of the previous phenomena in mixed characteristic are an instance of the same problem, and can be overcome via the same idea, somewhat restoring the symmetry between the equal and unequal characteristic situation. Morally, the idea - which is by no means ours and has already been suggested several times in the literature - is to regard $\bm{\mu}_{n}$ as a sort of ``base field'' over which $K$ lives; while this does not literally make sense, it suggests to replace vector spaces with (pointed) finite sets with a free action of $\bm{\mu}_{n}$. These form a perfectly reasonable category whose $K$-groups one can consider, and we will show that these groups are the target of a family of maps from $K_{i}(K)$ recovering the power residue symbol for $i=2$. Modulo replacing vector spaces with $\bm{\mu}_{n}$-sets, our $K$-theoretic construction will mirror very closely its equal characteristic counterpart.

Our $K$-theoretic investigations will also suggest a natural definition of the notion of {\it determinant} of a finite set with free $\bm{\mu}_{n}$-action; we will then show that it enjoys formal properties analogous to those of the usual determinant, and that the resulting theory is strong enough to allow us to carry over the arguments in \cite{adck89} and construct central extensions
\begin{equation}\label{mixchar-metapl}
1 \rightarrow \bm{\mu}_{n}\rightarrow\widetilde{GL}_{m}(K)\rightarrow GL_{m}(K)\rightarrow 1,\ m\geq 1.
\end{equation}
As in the equal characteristic situation, this will allow us to define a symbol which for $m=1$ equals, up to sign, the {\it n}-th power residue symbol.

\subsection{} Let us point out that our definition of determinant of a $\bm{\mu}_{n}$-set, and its relation with the {\it n}-th power residue symbol, are not new: as far as we know this approach was first proposed by Kapranov and Smirnov \cite{ks}; some properties of the determinant which we will need are also stated in the manuscript \cite{ks}, but neither the relation of this notion with $K$-theory nor its application to the construction of the central extensions described above are explored there. In the body of the text we will provide proofs of the properties needed to construct the extensions \eqref{mixchar-metapl}.

\subsection{} In \cite{pr08} the extensions \eqref{mixchar-metapl} are constructed in a different way under the additional assumption that $n+1$ equals the cardinality of the residue field of $K$. At the end of \cite{pr08} Pablos Romo suggests to try to adapt the approach of \cite{adck89} to the study of reciprocity laws over number fields. Our construction may be regarded as a first step in this direction; however let us stress that we only consider tame symbols throughout this note, hence we unfortunately cannot address reciprocity laws yet.

\subsection{} This text initially grew out of the author's attempt to understand Clausen's construction of the tame quadratic residue symbol in \cite{cla12}. Although in the end our $K$-theoretic construction of the {\it n}-th power residue symbol for $n=2$ appears to be different from Clausen's one (to the best of our understanding), we will start this document by providing a low-tech exposition of part of Clausen's construction in the tame case, and a detailed explanation of why it recovers the quadratic residue symbol. We hope that this can be of some use to those readers wishing to approach \cite{cla12}.

\subsection{Acknowledgements.} We would like to thank Dustin Clausen for answering our (naive) questions and sharing his notes \cite{cla20}. We also wish to thank Chirantan Chowdhury, Lorenzo Mantovani and Jan Nekov\'{a}\v{r} for helpful discussions related to the topic of this paper. The author's research is supported by the ERC Grant 804176.

\section{The quadratic residue symbol}

\subsection{} Fix a mixed characteristic local field $K$ with residue field $k$ of odd cardinality $q$. We will now describe Clausen's $K$-theoretic enhancement of the Hilbert symbol $K^{\times}\times K^{\times}\rightarrow\{\pm 1\}$. This section should be regarded as an expanded version of the footnote in \cite[page 3]{cla12}. Our treatment will rely on Quillen's classical definition of $K$-theory via the $+$ construction, which we recall first.

\subsection{Reminder of Quillen's $+$ construction and $K$-theory of rings.} Let $G$ be a group and $N$ a normal, perfect subgroup of $G$. The classifying space $BG$ has fundamental group $\pi_{1}(BG)=G$, hence $N$ is a perfect, normal subgroup of $\pi_{1}(BG)$. In this situation Quillen's $+$ construction gives rise to a CW-complex $BG^{+}$ with a map $\iota$ : $BG\rightarrow BG^{+}$ sending $N\subset\pi_{1}(BG)$ to the identity, and initial (up to homotopy) among all maps $BG\rightarrow Y$ killing $N$. The map $\iota$ induces an isomorphism $\pi_{1}(BG)/N=G/N\simeq\pi_{1}(BG^{+})$.

In particular, let $R$ be a (commutative) ring and let $GL(R)$ be the direct limit of the general linear groups $GL_{r}(R)$ along the transition maps adding 1 to the bottom right corner of a matrix. The commutator subgroup $[GL(R),\ GL(R)]\subset GL(R)$ is perfect. This follows from Whitehead's theorem, identifying $[GL(R),\ GL(R)]$ with the group generated by elementary matrices, which is perfect. Letting $BGL(R)^{+}$ be the space obtained performing the $+$ construction on $BGL(R)$ with respect to $N=[GL(R),\ GL(R)]$, the $K$-groups of $R$ are defined, for $i\geq 1$, as $K_{i}(R)= \pi_{i}(BGL(R)^{+})$ .

In particular we have $K_{1}(R)=GL(R)^{ab}$ by construction. The determinant maps on each $GL_{r}(R)$ are compatible with the transition maps, hence they induce a surjection $K_{1}(R)\rightarrow R^{\times}$, fitting into a split short exact sequence
\begin{equation*}
1\rightarrow SK_{1}(R)\rightarrow K_{1}(R)\rightarrow R^{\times}\rightarrow 1;
\end{equation*}
the {\it special Whitehead group} $SK_{1}(R)$ is trivial if $R$ is a field, hence in this case we have $K_{1}(R)=R^{\times}$.

\subsection{$K$-theory of pointed $q$-sets.} Recall that $k$ is the finite field with $q$ elements. For each integer $r\geq 0$ let $\Sigma_{q^{r}}^{*}$ be the set of permutations of the {\it pointed} set $(k^{r},\ 0)$:

\begin{equation*}
\Sigma_{q^{r}}^{*}=\{\sigma: k^{r}\rightarrow k^{r} \text{ bijective}, \ \sigma(0)=0 \}.
\end{equation*}

For every $r\geq 0$ we have a map $i_{r}: \Sigma_{q^{r}}^{*}\rightarrow\Sigma_{q^{r+1}}^{*}$ sending a permutation $\sigma$ to $\sigma\times Id$ : $k^{r}\times k\rightarrow k^{r}\times k$. Let $\Sigma_{q^{\infty}}^{*}$ be the direct limit of the groups $\Sigma_{q^{r}}^{*}$ with respect to the transition maps $i_{r}$. We claim that the commutator subgroup $[\Sigma_{q^{\infty}}^{*},\ \Sigma_{q^{\infty}}^{*}]$ is perfect. Indeed $\Sigma_{q^{r}}^{*}$ is isomorphic to the symmetric group on $q^{r}-1$ letters, whose commutator subgroup is the alternating group on $q^{r}-1$ letters. This is simple, hence perfect, if $q^{r}-1\geq 5$; the claim follows.

We can therefore look at the space $B(\Sigma_{q^{\infty}}^{*})^{+}$, where the plus construction is performed with respect to $[\Sigma_{q^{\infty}}^{*},\ \Sigma_{q^{\infty}}^{*}]$. We define
\begin{equation*}
K_{i}(\mathcal{S}_{q^{\infty}}^{*})=\pi_{i}(B(\Sigma_{q^{\infty}}^{*})^{+}),\ i\geq 1.
\end{equation*}

\begin{lem}
The sign maps $sgn: \Sigma_{q^{r}}^{*}\rightarrow\{\pm 1\}$ are compatible with the transition maps $i_{r}$ and induce an isomorphism $K_{1}(\mathcal{S}_{q^{\infty}}^{*})\simeq \mathbf{Z}/2\mathbf{Z}$.
\end{lem}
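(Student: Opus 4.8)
The plan is to reduce everything to the structure of $\Sigma_{q^r}^*$ as a symmetric group together with a single signature computation. As already noted in the text, taking the nonzero elements of $k^r$ as the set acted upon identifies $\Sigma_{q^r}^*$ with the symmetric group on $q^r-1$ letters, and under this identification $sgn$ is the usual signature homomorphism. First I would check compatibility. The transition map sends $\sigma$ to $\sigma\times \mathrm{Id}$, which permutes $k^{r+1}=k^r\times k$ by acting as $\sigma$ on each fibre $k^r\times\{c\}$, $c\in k$. Since this permutation preserves the partition of $k^{r+1}$ into these $q$ fibres and acts on each by $\sigma$, its signature as a permutation of all of $k^{r+1}$ is $sgn(\sigma)^q$; and as $\sigma\times\mathrm{Id}$ fixes the origin, deleting it leaves the signature unchanged, so $sgn(i_r(\sigma))=sgn(\sigma)^q$. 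Because $q$ is odd this gives $sgn(i_r(\sigma))=sgn(\sigma)$, which is exactly the asserted compatibility and produces a map $sgn:\Sigma_{q^\infty}^*\rightarrow\{\pm 1\}$ on the colimit.

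Next I would identify the target. By the defining property of the $+$ construction recalled above, $\pi_1(B(\Sigma_{q^\infty}^*)^+)=\Sigma_{q^\infty}^*/[\Sigma_{q^\infty}^*,\Sigma_{q^\infty}^*]$, so that $K_1(\mathcal{S}_{q^\infty}^*)$ is the abelianization of $\Sigma_{q^\infty}^*$. Since abelianization is a left adjoint it commutes with the filtered colimit defining $\Sigma_{q^\infty}^*$, whence $K_1(\mathcal{S}_{q^\infty}^*)=\varinjlim_r (\Sigma_{q^r}^*)^{ab}$.

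Finally I would compute this colimit. For every $r\geq 1$ we have $q^r-1\geq 2$ (as $q\geq 3$), so $(\Sigma_{q^r}^*)^{ab}\cong\Z/2\Z$ with the isomorphism induced by $sgn$, the commutator subgroup being the alternating group. The compatibility established above says precisely that each transition map induces the identity $\Z/2\Z\rightarrow\Z/2\Z$ after these identifications, so the colimit is $\Z/2\Z$ and $sgn$ induces the desired isomorphism $K_1(\mathcal{S}_{q^\infty}^*)\cong\Z/2\Z$.

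The only genuine content, and the step to watch, is the signature computation: one must correctly count the $q$ fibres and observe that the exponent $q$ collapses exactly because $q$ is odd. This oddness hypothesis, standing in this section, is what forces the signatures to be compatible and hence makes the statement true; for even $q$ the sign maps would fail to assemble over the transition maps. Everything else is either formal (the $+$-construction identification and the commutation of abelianization with filtered colimits) or standard (the fact that $(S_n)^{ab}\cong\Z/2\Z$ for $n\geq 2$).
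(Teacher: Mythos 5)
Your proof is correct and follows essentially the same route as the paper: the identical fibrewise signature computation $sgn(\sigma\times\mathrm{Id})=sgn(\sigma)^q=sgn(\sigma)$ (using $q$ odd), followed by identifying $K_1(\mathcal{S}_{q^\infty}^*)$ with the abelianization of $\Sigma_{q^\infty}^*$ whose commutator subgroup is the colimit of alternating groups. You merely make explicit the formal step (abelianization commuting with the filtered colimit) that the paper leaves implicit.
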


\begin{proof}
Given $r\geq 0$ and $\sigma\in\Sigma_{q^{r}}^{*}$, the permutation $\sigma\times Id$ of $k^{r}\times k$ stabilises each $k^{r}\times\{x\}, \ x\in k$, and acts on it as $\sigma$. It follows that the sign of $\sigma\times Id$ equals $sgn(\sigma)^{q}=sgn(\sigma)$, where the equality holds as $q$ is odd. Hence the sign maps yield a well defined, surjective map $\Sigma_{q^{\infty}}^{*}\rightarrow\{\pm 1\}$, whose kernel is the commutator subgroup (which coincides with the limit of the alternating groups).
\end{proof}

\subsection{The quadratic $K$-residue symbol.} For each $r\geq 0$ we have $GL_{r}(k)=Aut_{k}(k^{r})$, and the map $GL_{r}(k)\rightarrow GL_{r+1}(k)$ adding 1 in the bottom right corner of a matrix is the map sending an automorphism $f\in Aut_{k}(k^{r})$ to $f\times Id\in Aut_{k}(k^{r+1})$. Forgetting the $k$-vector space structure on $k^{r}$ we obtain compatible maps $GL_{r}(k)\rightarrow\Sigma_{q^{r}}^{*}$ for $r\geq 0$, hence a map
\begin{equation*}
GL(k)\rightarrow\Sigma_{q^{\infty}}^{*}.
\end{equation*}
The above maps induce, for each $i\geq 1$, a morphism
\begin{equation*}
q_{i}:K_{i}(k)\rightarrow K_{i}(\mathcal{S}_{q^{\infty}}^{*}).
\end{equation*}

\begin{prop}\label{legendre}
The map $q_{1}$: $k^{\times}\rightarrow\{\pm 1\}$ coincides with the Legendre symbol, i. e. it sends $x$ to $x^{\frac{q-1}{2}}$.
\end{prop}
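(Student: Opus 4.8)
The plan is to unwind the definition of $q_1$ into a concrete sign computation and then identify that sign with the Legendre symbol via a Zolotarev-type argument. First I would use the two identifications already at hand: $K_1(k) = GL(k)^{ab} = k^\times$ via the determinant, and $K_1(\mathcal{S}_{q^\infty}^{*}) = (\Sigma_{q^\infty}^{*})^{ab} = \{\pm 1\}$ via the sign map of the preceding Lemma. Since $q_1$ is induced on $\pi_1$ of the plus-constructions by the group homomorphism $GL(k) \rightarrow \Sigma_{q^\infty}^{*}$, it is exactly the map on abelianisations. Hence to evaluate $q_1(x)$ for $x \in k^\times$ I may pick any determinant-preimage of $x$ and read off the sign of its image in $\Sigma_{q^\infty}^{*}$. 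The most economical choice is the element $(x) \in GL_1(k)$, whose determinant is $x$ and whose image in $\Sigma_q^{*}$ is the permutation $m_x$ of the pointed set $(k,0)$ given by $y \mapsto xy$. Stability of the sign under the transition maps $i_r$ (recorded in the proof of the Lemma, using that $q$ is odd) guarantees that computing at level $r=1$ gives the correct stable value, so $q_1(x) = \mathrm{sgn}(m_x)$, the sign of multiplication by $x$ acting on $k \smallsetminus \{0\} = k^\times$.

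Second I would compute $\mathrm{sgn}(m_x)$. The assignment $x \mapsto \mathrm{sgn}(m_x)$ is a homomorphism $k^\times \rightarrow \{\pm 1\}$, because $m_{xy} = m_x m_y$ and the sign is multiplicative; likewise $x \mapsto x^{(q-1)/2}$ is a homomorphism $k^\times \rightarrow \{\pm 1\}$ whose kernel is the index-$2$ subgroup of squares. As $k^\times$ is cyclic of even order $q-1$, it has a unique subgroup of index $2$, so the two homomorphisms coincide as soon as I check that $x \mapsto \mathrm{sgn}(m_x)$ is nontrivial. For this I take a generator $g$ of $k^\times$: under the isomorphism $k^\times \cong \mathbf{Z}/(q-1)\mathbf{Z}$ sending $g^i$ to $i$, the permutation $m_g$ becomes the translation $i \mapsto i+1$, that is a single $(q-1)$-cycle, whose sign is $(-1)^{q-2} = -1$ since $q-1$ is even. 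Hence $\mathrm{sgn}(m_g) = -1$, the homomorphism is surjective, and therefore $q_1(x) = \mathrm{sgn}(m_x) = x^{(q-1)/2}$ for every $x$, as claimed.

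I do not expect a genuine obstacle here. The only points requiring care are the bookkeeping that $(x) \in GL_1(k)$ really computes $q_1(x)$ and that the sign is stable under stabilisation, but both follow immediately from the determinant description of $K_1(k)$ and from the sign stability already established in the preceding Lemma. The mathematical content is entirely classical, amounting to Zolotarev's lemma for the cyclic group $k^\times$, and the cleanest route is to invoke uniqueness of the index-$2$ subgroup of a cyclic group rather than an explicit cycle-type computation.
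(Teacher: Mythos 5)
Your proposal is correct and follows essentially the same route as the paper: both reduce, via stability of the sign under the transition maps and the determinant description of $K_1(k)$, to computing the sign of the multiplication map $m_x$ on $(k,0)$ at level $r=1$, and both conclude by Zolotarev's lemma, observing that a generator of $k^\times$ acts as a $(q-1)$-cycle of sign $-1$. The only cosmetic difference is that the paper phrases the reduction via the factorisation $M = \mathrm{diag}(\det M,1,\ldots,1)\cdot M_1$ with $\det M_1 = 1$, while you invoke the abelianisation description of $q_1$ directly, and you make the final step explicit via the unique index-$2$ subgroup of the cyclic group $k^\times$; these are the same argument in slightly different clothing.
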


\begin{proof}
For every $r\geq 0$, let $f_{r}: GL_{r}(k)\rightarrow\Sigma_{q^{r}}^{*}$ be the forgetful map defined above. Because the target of $sgn$ is abelian, the composite $sgn\circ f_{r}$ factors through the determinant map, yielding a morphism $q$ making the following diagram commutative
\begin{center}
\begin{tikzcd}
GL_{r}(k)\arrow[r, "f_r"] \arrow[d, "det"] & \Sigma_{q^{r}}^{*}\arrow[d, "sgn"]\\
k^\times \arrow[r, "q"] &  \{\pm 1\}.
\end{tikzcd}
\end{center}
To prove the proposition we must show that $q$ coincides with the Legendre symbol. Every matrix $M\in GL_{r}(k)$ can be written as $M=DM_{1}$, where $D=\mathrm{diag}(\det(M), 1, 1, \ldots, 1)$ and $det(M_{1})=1$; hence it suffices to show that $sgn\circ f_{r}(\mathrm{diag}(d, 1, 1, \ldots , 1)) =d^{\frac{q-1}{2}}$, for every $d\in k^{\times}$. We know by the previous lemma that the sign map is invariant with respect to the transition morphisms, hence we are reduced to prove the statement for $r=1$. In other words we must show that, for $d\in k^{\times}$, the sign of the multiplication by $d$ map $\sigma_{d}: k\rightarrow k$ is $d^{\frac{q-1}{2}}$. This is Zolotarev's lemma; to prove it we notice that, for a generator $x$ of $k^{\times}$, the map $\sigma_{x}$ cyclically permutes $k^{\times}$, hence it has sign $(-1)^{q-2}=-1$ (as $q$ is odd). Therefore the sign of $\sigma_{d}$ equals one if and only if $d$ is an even power of $x$.
\end{proof}

\subsection{Remarks.}
\begin{enumerate}
\item As we will recall in the next section, for every $i \geq 1$ there is a map $\partial_{i-1}:K_{i}(K)\rightarrow K_{i-1}(k)$; when $i = 2$ it is induced by the symbol $K^\times \times K^\times \rightarrow k^\times$ sending $(a, b)$ to $\displaystyle (-1)^{v(a)v(b)}\frac{a^{v(b)}}{b^{v(a)}}\ \pmod{\pi}$. Hence we obtain maps $q_{i-1}\circ \partial_{i-1}: K_i(K) \rightarrow K_{i-1}(\mathcal{S}_{q^{\infty}}^{*})$ for $i \geq 1$, recovering the quadratic residue symbol for $i = 2$.
\item  If $K = \mathbf{Q}_p$, the maps $q_i : K_i(\mathbf{F}_p) \rightarrow K_i(\mathcal{S}_{p^{\infty}}^{*})$ defined above coincide with those induced in $K$-theory by the map $Vect^{\sim}_{\mathbf{F}_p}\rightarrow Set^{\sim}_p$ constructed in \cite[page 8]{cla12}. As explained to the author by Clausen, the target can be identified with the spectrum of units in the $p$-inverted sphere spectrum $\mathbb{S}[1/p]$, and for suitable $i$ the image of the map $q_i$ can be related to the image of the real $J$-homomorphism.
\end{enumerate}

\section{The tame symbol via K-theory}

\subsection{}\label{tameK.1} Let $K$ be a local field of characteristic zero, with finite residue field $k$ of cardinality $q$; let $v$ be the valuation on $K$. Fix an integer $n$ dividing $q-1$; the {\it n}-th power residue symbol
\begin{align*}
(\cdot, \cdot)_{n}: \ K^{\times}\times K^{\times} & \rightarrow \bm{\mu}_{n}\\
(a, b)& \mapsto \left((-1)^{v(a)v(b)}\frac{a^{v(b)}}{b^{v(a)}}\pmod{\pi}\right)^{\frac{q-1}{n}}
\end{align*}
is a Steinberg symbol, hence it factors through a map from the Milnor $K$-group $K_{2}^{M}(K)$ to $\bm{\mu}_{n}$. In view of Matsumoto's theorem, the symbol induces a map $K_{2}(K)\rightarrow \bm{\mu}_{n}$; the aim of this section is to construct $K$-theoretic maps with source $K_{i}(K)$ for $i\geq 1$ recovering the power residue symbol for $i=2$. Similarly to what we did above, the idea is to forget the vector space structure; however, in this section we will {\it remember the action of n-th roots of unity}. As we mentioned in the introduction, those play a role somewhat analogous to that of the base field in the equal characteristic setting; in order to highlight the analogy we will first recall the $K$-theoretic interpretation of tame symbols in the latter situation.

\subsection{The boundary maps} Let $A$ be a $DVR$ with maximal ideal $\mathfrak{m}$, fraction field $F$ and residue field $k$; there is a homotopy fibre sequence $K(k)\rightarrow K(A)\rightarrow K(F)$ inducing a long exact sequence
\begin{equation*}
\ldots K_{i}(k)\rightarrow K_{i}(A)\rightarrow K_{i}(F) \xrightarrow{\partial_{i-1}} K_{i-1}(k)\rightarrow.\ldots .
\end{equation*}
If $i=2$ then the map $\partial_{1}: K_{2}(F)\rightarrow K_{1}(k)$ is induced by the symbol sending $(a, b)\in F^{\times}\times F^{\times}$ to $(-1)^{v(a)v(b)}\displaystyle \frac{a^{v(b)}}{b^{v(a)}} \pmod{\mathfrak{m}}$.

\subsection{The tame symbol in equal characteristic} Fix a smooth, projective curve $X$ over a finite field $k$ of odd characteristic, and a closed point $x$ of $X$. Let $\mathcal{O}_{x}$ be the completed local ring of $X$ at $x$, and $\mathrm{m}_{x}$ (resp. $k(x)$, resp. $K_{x}$) the maximal ideal (resp. residue field, resp. fraction field) of $\mathcal{O}_{x}$. Let $v_{x}$ be the valuation on $K_{x}$ and $N_{k(x)/k}: k(x)^{\times}\rightarrow k^{\times}$ the norm map. The tame symbol at $x$ is the map $K_{x}^\times \times K_{x}^\times \rightarrow k^{\times}$ sending $(f, g)$ to
\begin{equation*}
(f, g)_{x}=N_{k(x)/k}\left( (-1)^{v_x(f)v_x(g)}\frac{f^{v_x(g)}}{g^{v_x(f)}} \pmod{\mathrm{m}_x} \right).
\end{equation*}
As explained above, the map sending $(f, g)$ to $(-1)^{v_{x}(f)v_{x}(g)}\displaystyle \frac{f^{v_{x}(g)}}{g^{v_{x}(f)}}\pmod{\mathrm{m}_x}$ is the second of the sequence of boundary maps $\partial_{i-1} : K_{i}(K_{x})\rightarrow K_{i-1}(k(x))$. We will now argue that the norm map also has a natural $K$-theoretic enhancement, thereby allowing to define a family of maps $K_{i}(K_{x})\rightarrow K_{i-1}(k), i\geq 1$, recovering the tame symbol for $i=2$.

The forgetful functor
\begin{equation}\label{forget}
\{\text{finite dimensional } k(x)-\text{vector spaces}\} \rightarrow \{ \text{finite dimensional } k- \text{vector spaces}\}
\end{equation}
induces maps in $K$-theory $N_{i}: K_{i}(k(x))\rightarrow K_{i}(k)$ for $i\geq 1$.

\begin{lem}
The map $N_{1} : k(x)^{\times}\rightarrow k^{\times}$ coincides with the norm map.
\end{lem}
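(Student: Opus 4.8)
The plan is to unwind the definition of $N_1$ and reduce the statement to the classical description of the field norm as the determinant of a multiplication operator. Write $d=[k(x):k]$. First I would recall that, since $k(x)$ and $k$ are fields, the determinant induces isomorphisms $K_1(k(x))\simeq k(x)^\times$ and $K_1(k)\simeq k^\times$, as noted in the reminder above. By definition $N_1$ is the effect on $\pi_1$ of the plus construction of the forgetful functor \eqref{forget}; concretely, after fixing a $k$-basis $e_1,\dots,e_d$ of $k(x)$, this functor is realised on general linear groups by the homomorphism $GL(k(x))\to GL(k)$ sending $f\in GL_r(k(x))=\mathrm{Aut}_{k(x)}(k(x)^r)$ to the $k$-linear automorphism it defines on $k(x)^r\cong k^{rd}$, an element of $GL_{rd}(k)$. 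Under the two determinant isomorphisms, $N_1$ is then the map $k(x)^\times\to k^\times$ induced on abelianisations by this homomorphism.

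The computation is then immediate. An element $\alpha\in k(x)^\times=K_1(k(x))$ is represented by $\mathrm{diag}(\alpha,1,\dots,1)\in GL_r(k(x))$. Its image under the forgetful map is block diagonal over $k$, with one block equal to the matrix $m_\alpha\in GL_d(k)$ of multiplication by $\alpha$ on $k(x)$ in the basis $e_1,\dots,e_d$, and the remaining blocks equal to the identity. Hence its $k$-determinant is $\det_k(m_\alpha)$, so $N_1(\alpha)=\det_k(m_\alpha)$. It remains to invoke the standard fact that $\det_k(m_\alpha)=N_{k(x)/k}(\alpha)$: this is one of the equivalent definitions of the norm, and for the (Galois) extension $k(x)/k$ of finite fields it agrees with $\prod_{\sigma}\sigma(\alpha)$. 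Note also that changing the basis $e_1,\dots,e_d$ conjugates $m_\alpha$, leaving $\det_k(m_\alpha)$ unchanged, so the map is well defined independently of all choices.

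I do not expect a substantive mathematical obstacle; the content is bookkeeping. The one point deserving genuine care is the identification used in the first paragraph: one must check that the map induced on $\pi_1(BGL(\cdot)^+)$ by the exact forgetful functor is indeed the one coming from the group-level homomorphism $GL(k(x))\to GL(k)$, which is the standard compatibility between the plus-construction and functorial descriptions of lower $K$-theory. A secondary, harmless subtlety is the behaviour with respect to the transition maps: the transition $f\mapsto f\times\mathrm{Id}_{k(x)}$ on $GL_r(k(x))$ becomes $\mathrm{forget}(f)\times\mathrm{Id}_{k^d}$, i.e. the image of $\mathrm{forget}(f)$ under the $d$-fold stabilisation $GL_{rd}(k)\to GL_{(r+1)d}(k)$, so the maps $GL_r(k(x))\to GL_{rd}(k)$ assemble coherently into the colimit $GL(k)$ and the argument is unaffected.
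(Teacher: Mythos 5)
Your proposal is correct and follows essentially the same route as the paper: identify $N_1$ with the abelianisation of the group-level forgetful map $GL(k(x))\to GL(k)$, reduce to the class of $\mathrm{diag}(\alpha,1,\dots,1)$ (equivalently, to the case $r=1$), and conclude from the fact that the norm is the determinant of the multiplication map $m_\alpha$. The only cosmetic difference is that you evaluate directly on $K_1$ classes, whereas the paper first notes that each $f_r:GL_r(k(x))\to k^\times$ factors through the determinant and then reduces to $r=1$ via compatibility of the $f_r$.
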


\begin{proof}
Let $d_{x}=[k(x): k]$; the natural maps $GL_{r}(k(x))\rightarrow GL_{d(x)r}(k)\rightarrow GL(k)$ induce a map $GL(k(x))\rightarrow GL(k)$, whose abelianisation is the map $N_{1}$. For every integer $r\geq 1$ the induced map $f_{r}$ : $GL_{r}(k(x))\rightarrow GL(k)^{ab}=k^{\times}$ factors through the determinant. By the same argument used in the proof of Proposition \ref{legendre} it suffices to check that $f_{r}$ sends diag $(d, 1,1, \ldots, 1)$ to $N_{k(x)/k}(d)$; since the maps $f_{r}$ are compatible as $r$ varies we are reduced to the case $r=1$, where the equality $N_{k(x)/k}=f_{1}$ holds true by definition of the norm. 
\end{proof}

\begin{rem}
In fact, Clausen discovered that the boundary maps $\partial_{i}$ can {\it also} be interpreted as coming from suitable {\it forgetful maps}, exploiting the fact that every element of $GL_{r}(K_{x})$ can be seen as an automorphism of $(K_{x})^{r}$ {\it as a locally linearly compact} $k(x)$-vector space. This leads to a very natural and elegant proof of Weil's reciprocity law \cite{cla20}.
\end{rem}

\subsection{The mixed characteristic situation.} Let us now come back to the mixed characteristic setting. With the notation introduced in \ref{tameK.1}, we are interested in the symbol $(\cdot, \cdot)_{n}: K^{\times}\times K^{\times}\rightarrow \bm{\mu}_{n}$; the induced map $K_{2}(K)\rightarrow \bm{\mu}_{n}$ is the composite of the boundary map $\partial_{1}$ : $K_{2}(K)\rightarrow K_{1}(k)$ and of the map
\begin{align*}
p_{n}:k^{\times}& \rightarrow \bm{\mu}_{n}\\
x & \mapsto x^{\frac{q-1}{n}}.
\end{align*}
In what follows we will upgrade the {\it n}-th power residue symbol to a $K$-theoretic map. To do this it suffices to deal with the map $p_{n}$, on which we will concentrate from now on; thinking of $\bm{\mu}_{n}$ as of our ``base field'', we will construct a functor
\begin{equation*}
\{\text{finite dimensional } k-\text{vector spaces}\} \rightarrow \{\text{finite dimensional } ``\bm{\mu}_{n}-\text{vector spaces''}\}
\end{equation*}
analogous to \eqref{forget}, and study its effect on $K$-theory. We will slightly change language and work with Quillen's definition of $K$-theory of symmetric monoidal categories via the $S^{-1}S$ construction.

\subsection{} Take a small symmetric monoidal category $(\mathcal{S}, \otimes, 0)$ satisfying the following conditions:
\begin{enumerate}
\item Every morphism in $\mathcal{S}$ is an isomorphism.
\item  For every couple of objects $s, t\in \mathcal{S}$, the natural map $Aut(t)\rightarrow Aut(s\otimes t)$ is injective.
\end{enumerate}

\subsection{Examples.}
\begin{enumerate}
\item Let $Fin$ be the category whose objects are the empty set and, for $n\geq 1$, the set $\underline{n}= \{1,2,\ldots, n\}$. Morphisms are bijections, and we put a monoidal structure on $Fin$ via disjoint union.
\item We denote by $\mathcal{S}_{q^{\infty}}^{*}$ the category whose objects are the pointed sets $(k^{r}, 0)$ and morphisms are bijections preserving the marked point. We put a monoidal structure on $\mathcal{S}_{q^{\infty}}^{*}$ via cartesian product of sets.
\item For a field $F$, let $V_{F}$ be the category whose objects are finite dimensional vector spaces $F^{n}, n\geq 0$, and morphisms are $F$-linear isomorphisms. The monoidal structure is given by direct sum of vector spaces.
\end{enumerate}

\subsection{$K$-theory of symmetric monoidal categories.} Quillen defined, for $(\mathcal{S}, \otimes, 0)$ as above, a ``group completion'' $S^{-1}\mathcal{S}$, whose objects are pairs $(a, b)$ of objects of $\mathcal{S}$, and a morphism from $(a, b)$ to $(c, d)$ is an equivalence class of morphisms
\begin{equation*}
(a, b) \xrightarrow{s \otimes (-)} (s\otimes a, s\otimes b)\rightarrow(c, d);
\end{equation*}
see \cite[Chapter IV, Definition 4.2]{wei13} for more details. Morally, we are thinking of $(a, b)$ as the ``quotient'' of $a$ and $b$, and the above definition generalises the familiar rule $\displaystyle \frac{as}{bs}=\frac{a}{b}$. This intuition is confirmed by the fact that the map on $\pi_{0}$ (the set of isomorphism classes of objects) induced by the functor sending $a$ to $(a, 0)$ identifies $\pi_{0}(S^{-1}\mathcal{S})$ with the group completion of the monoid $\pi_{0}(\mathcal{S})$.

One can now look at the geometric realisation $BS^{-1}\mathcal{S}$ of the small category $S^{-1}\mathcal{S}$ (i. e. the geometric realisation of its nerve), and give the following definition:
\begin{defin}{(Quillen)}
For $i\geq 0$, the $i$-th $K$-group of $\mathcal{S}$ is
\begin{equation*}
K_{i}(\mathcal{S})=\pi_{i}(S^{-1}\mathcal{S}).
\end{equation*}
\end{defin}

\begin{lem}{(\cite{wei13}, Corollary 4.8.1, p. 334)}
$K_{1}(\displaystyle \mathcal{S})=\varinjlim_{s}H_{1}(Aut(s),\ \mathbf{Z})$.
\end{lem}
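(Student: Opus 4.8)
The plan is to compute $\pi_1(S^{-1}\mathcal{S})$ by passing through homology, exploiting the fact that $BS^{-1}\mathcal{S}$ is a group completion of $B\mathcal{S}$. First I would unpack the two hypotheses. Condition (1) says $\mathcal{S}$ is a groupoid, so its classifying space splits as $B\mathcal{S}\simeq\coprod_{[s]\in\pi_0(\mathcal{S})}B\mathrm{Aut}(s)$, and its homology is $H_*(B\mathcal{S};\mathbf{Z})=\bigoplus_{[s]}H_*(\mathrm{Aut}(s);\mathbf{Z})$, which the symmetric monoidal product $\otimes$ turns into a graded-commutative Pontryagin ring graded by the monoid $\pi_0(\mathcal{S})$. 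Condition (2) guarantees that the stabilisation maps $H_*(\mathrm{Aut}(s))\to H_*(\mathrm{Aut}(s\otimes t))$ induced by $\sigma\mapsto\sigma\otimes\mathrm{id}_t$ come from injective group homomorphisms, so that the colimits appearing below are filtered colimits along injections.

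Next I would invoke Quillen's $S^{-1}S$ theorem, which asserts that the canonical map $B\mathcal{S}\to BS^{-1}\mathcal{S}$, $a\mapsto(0,a)$, exhibits $BS^{-1}\mathcal{S}$ as a group completion of the homotopy-commutative $H$-space $B\mathcal{S}$; in particular $\pi_0(S^{-1}\mathcal{S})$ is the group completion of $\pi_0(\mathcal{S})$, as already recalled in the excerpt. The homological heart is the group completion theorem of McDuff--Segal, providing an isomorphism $H_*(BS^{-1}\mathcal{S};\mathbf{Z})\cong H_*(B\mathcal{S};\mathbf{Z})[\pi_0(\mathcal{S})^{-1}]$, the localisation of the Pontryagin ring obtained by inverting every class $[s]\in\pi_0(\mathcal{S})$.

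I would then descend from homology to $\pi_1$. Since $BS^{-1}\mathcal{S}$ is a group-like $H$-space, all of its path components are homotopy equivalent; letting $Y$ be the basepoint component, $\pi_1(S^{-1}\mathcal{S})=\pi_1(Y)$, and this group is abelian (the fundamental group of an $H$-space), so the Hurewicz map yields $\pi_1(Y)\cong H_1(Y;\mathbf{Z})$. It remains to read off $H_1(Y;\mathbf{Z})$ from the localisation above: it is the part of $H_1(BS^{-1}\mathcal{S})$ sitting in degree $0$ for the $\pi_0(S^{-1}\mathcal{S})$-grading. Concretely such a class is represented by a fraction $x\cdot[s]^{-1}$ with $x\in H_1(\mathrm{Aut}(t))$ and $[t]=[s]$, and tensoring both the numerator and the object by $s'$ is exactly the stabilisation map; using condition (2) this identifies the grading-$0$ part with the filtered colimit $\varinjlim_s H_1(\mathrm{Aut}(s),\mathbf{Z})$ over the translation category of $(\mathcal{S},\otimes)$, which is the asserted formula.

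The main obstacle I anticipate is precisely the last step: one must check that inverting $\pi_0(\mathcal{S})$ in homological degree $1$ and grading $0$ produces exactly $\varinjlim_s H_1(\mathrm{Aut}(s),\mathbf{Z})$ with no spurious denominators, which is where the injectivity hypothesis (2) and a cofinality argument for the diagram of objects of $\mathcal{S}$ are indispensable. A secondary, more routine point is verifying the hypotheses of the group completion theorem, namely homotopy commutativity of $B\mathcal{S}$ and the existence of a cofinal sequence in $\pi_0(\mathcal{S})$; these follow formally from the symmetric monoidal structure together with condition (2).
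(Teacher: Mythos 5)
Your proof is correct and is essentially the argument behind the result the paper cites: the paper gives no proof of its own, deferring to Weibel (Corollary 4.8.1), whose proof is exactly your combination of Quillen's group completion theorem $H_*(BS^{-1}\mathcal{S})\cong H_*(B\mathcal{S})[\pi_0(\mathcal{S})^{-1}]$ (valid precisely under hypotheses (1) and (2)) with the Hurewicz isomorphism on the basepoint component of the group-like $H$-space $BS^{-1}\mathcal{S}$, followed by identifying the degree-zero graded piece of the localisation with $\varinjlim_s H_1(\mathrm{Aut}(s),\mathbf{Z})$. No gaps to report.
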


\subsection{Example.}
If $\mathcal{S}=V_{F}$ then Quillen's $+=S^{-1}$ theorem asserts that $ B(S^{-1}\mathcal{S})\sim \mathbf{Z}\times BGL(F)^{+}$ - this is true for arbitrary rings - hence the above definition of $K$-theory coincides with the one recalled in the second section.

If $\mathcal{S}=Fin$ one has $B(S^{-1}\mathcal{S})\sim \mathbf{Z}\times B\Sigma_{\infty}^{+}$, where $\Sigma_{\infty}$ is the direct limit of the symmetric groups $\Sigma_{n}$ along the maps fixing the added points. The Barratt-Priddy theorem identifies $B(S^{-1}\mathcal{S})$ with the infinite loop space $\displaystyle \lim_{n \to \infty}\Omega^{n}S^{n}$ (whose homotopy groups are the stable homotopy groups of spheres).

Finally, the $K$-groups of $\mathcal{S}=\mathcal{S}_{q^{\infty}}^{*}$ also agree with those defined in the previous section. This follows from the general $+=S^{-1}$ result given in \cite[Theorem 4.10, p. 336]{wei13}.

\begin{rem}
The construction of the quadratic residue symbol given before can be reinterpreted as coming from the functor $V_{k}\rightarrow \mathcal{S}_{q^{\infty}}^{*}$ obtained sending a vector space to the underlying pointed set - i. e., we remember the identity of the underlying abelian group.
\end{rem}

\subsection{The category of finite free pointed $\bm{\mu}_{n}$-sets.} By a {\it finite free pointed} $\bm{\mu}_{n}$-set we mean a finite pointed set $(X,\ *)$ together with a left action of $\bm{\mu}_{n}$ on $X$ which fixes the distinguished point and is free when restricted to $X\smallsetminus \{*\}$. We denote by $V_{\bm{\mu}_{n}}$ the category whose objects are finite free pointed $\bm{\mu}_{n}$-sets (more precisely, we choose one representative in each isomorphism class) and morphisms are $\bm{\mu}_{n}$-equivariant bijections preserving the marked point. We endow $V_{\bm{\mu}_{n}}$ with the monoidal structure coming from cartesian product (with the componentwise $\bm{\mu}_{n}$-action on it). If $(X,\ *)$ is a finite free pointed $\bm{\mu}_{n}$-set, we will set $\tilde{X}=X\smallsetminus \{*\}$.

\subsubsection{Automorphisms of finite free pointed $\bm{\mu}_n$-sets.} Let $(X,\ *)$ be a finite free pointed $\bm{\mu}_{n}$-set, and let $Aut(X)$ be its automorphism group (as a pointed $\bm{\mu}_{n}$-set). Let $\mathcal{O}_{X}$ be the set of $\bm{\mu}_{n}$-orbits in $\tilde{X}$, and $\Sigma(\mathcal{O}_{X})$ the group of bijective maps from $\mathcal{O}_{X}$ to itself. Every automorphism $f$ of $X$ gives rise to an element $\sigma_{f}\in\Sigma(\mathcal{O}_{X})$; furthermore, choosing representatives $x_{1}, \ldots, x_{t}\in X$ of $\mathcal{O}_{X}$, for $1\leq i\leq t$ we have $f(x_{i})=\mu_{f}(i) \cdot x_{\sigma_f(i)}$ for a unique $\mu_{f} (i)\in \bm{\mu}_{n}$. Setting $\mu(f)=(\mu_{f}(i))_{1\leq i\leq t}\in \bm{\mu}_{n}^{t}$, the datum of $(\mu(f),\ \sigma_{f})$ determines uniquely $f$, hence we obtain an isomorphism $Aut(X) \simeq\bm{\mu}_{n}^{\mathcal{O}_{X}}\rtimes \Sigma(\mathcal{O}_{X})$.

\subsubsection{Abelianisation of $Aut(X)$.} The map $q: Aut(X) \rightarrow \bm{\mu}_{n}\times\Sigma(\mathcal{O}_{X})$ sending $((\mu_{i})_{1 \leq i \leq t},\ \sigma)$ to $(\displaystyle \prod_{i}\mu_{i},\ \sigma)$ is a group homomorphism; in particular, for every $f,\ g\in Aut(X)$ we have $q(gfg^{-1})= q(f)$, hence $q$ does not depend on the choice of $x_{1}, \ldots, x_{t}$. Given $\mu\in \bm{\mu}_{n}$, the elements of the form $((1, 1, \ldots, \mu, 1, \ldots, 1), 1)$ are conjugate in $Aut(X)$. It follows that, if $f: Aut(X) \rightarrow A$ is a morphism with target an abelian group, then $f$ factors through $q$. Therefore the composition of $q$ and the sign map on $\Sigma(\mathcal{O}_{X})$ realises $\bm{\mu}_{n}\times \mathbf{Z}/2\mathbf{Z}$ as the abelianisation of $Aut(X)$.

\begin{lem}\label{trasf.lem}
Let $a\in k^{\times}$ and $m_{a} : k\rightarrow k$ the map sending $x$ to $xa$. Then $m_{a}$ is an automorphism of $k$ as a free pointed $\bm{\mu}_{n}$-set, and the image of $m_{a}$ in $Aut(k)^{ab}$ is $(a^{\frac{q-1}{n}}, \ \varepsilon)$, for some $ \varepsilon \in\{\pm 1\}$.
\end{lem}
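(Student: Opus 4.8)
The plan is to check first that $m_{a}$ really is an automorphism of $(k,0)$ as a free pointed $\bm{\mu}_{n}$-set: it is a bijection with inverse $m_{a^{-1}}$, it fixes $0$, and since $k$ is commutative it commutes with the $\bm{\mu}_{n}$-action, which is given by multiplication ($\bm{\mu}_{n}$ being identified with the group of $n$-th roots of unity in $k^{\times}$). The action is free on $\tilde{k}=k^{\times}$ because it is the restriction of the group multiplication. Consequently the $\bm{\mu}_{n}$-orbits in $\tilde{k}$ are exactly the cosets $x\bm{\mu}_{n}$, so the orbit set $\mathcal{O}_{k}$ is identified with $k^{\times}/\bm{\mu}_{n}$ and has cardinality $t=\frac{q-1}{n}$.

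Next I would invoke the description of $Aut(k)^{ab}$ established above: the image of $m_{a}$ in $Aut(k)^{ab}\simeq\bm{\mu}_{n}\times\mathbf{Z}/2\mathbf{Z}$ is $(\prod_{i}\mu_{m_{a}}(i),\ \mathrm{sgn}(\sigma_{m_{a}}))$, where $\sigma_{m_{a}}\in\Sigma(\mathcal{O}_{k})$ is the induced permutation of orbits and the $\mu_{m_{a}}(i)\in\bm{\mu}_{n}$ are read off from a choice of orbit representatives. The second component is by definition a sign, which supplies the unspecified $\varepsilon\in\{\pm 1\}$ in the statement; hence the entire content of the lemma reduces to the identity $\prod_{i}\mu_{m_{a}}(i)=a^{\frac{q-1}{n}}$ in $\bm{\mu}_{n}$.

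To prove this identity I would pick representatives $x_{1},\ldots,x_{t}\in k^{\times}$ of the orbits. For each $i$ the element $a x_{i}$ lies in the orbit $x_{\sigma_{m_{a}}(i)}\bm{\mu}_{n}$, so that $\mu_{m_{a}}(i)=a\, x_{i}\, x_{\sigma_{m_{a}}(i)}^{-1}\in\bm{\mu}_{n}$. Multiplying over $i$ gives $\prod_{i}\mu_{m_{a}}(i)=a^{t}\,\bigl(\prod_{i}x_{i}\bigr)\bigl(\prod_{i}x_{\sigma_{m_{a}}(i)}\bigr)^{-1}$; since $k^{\times}$ is abelian and $\sigma_{m_{a}}$ merely permutes the indices, the factors $\prod_{i}x_{i}$ and $\prod_{i}x_{\sigma_{m_{a}}(i)}^{-1}$ cancel, leaving $\prod_{i}\mu_{m_{a}}(i)=a^{t}=a^{\frac{q-1}{n}}$. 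Because $q$ is conjugation-invariant, this product is automatically independent of the chosen representatives, so no separate well-definedness check is needed.

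The computation is short, and I do not expect a genuine obstacle. The only point that needs care is the telescoping/reindexing step, which works precisely because $k^{\times}$ is abelian, so that reordering the product of representatives costs nothing. The sign $\varepsilon=\mathrm{sgn}(\sigma_{m_{a}})$ need not be determined, since the statement only asserts $\varepsilon\in\{\pm 1\}$; one could pin it down by a Zolotarev-type argument as in the proof of Proposition \ref{legendre}, but this is not required here.
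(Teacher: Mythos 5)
Your proof is correct and follows essentially the same route as the paper's: choose orbit representatives $x_1,\ldots,x_t$, write $ax_i=\mu_i x_{\sigma(i)}$, and multiply over $i$ so that the representative factors cancel (since $\sigma$ is a permutation of the index set), yielding $\prod_i\mu_i=a^{\frac{q-1}{n}}$, which together with the already-established description of $Aut(k)^{ab}\simeq\bm{\mu}_n\times\mathbf{Z}/2\mathbf{Z}$ gives the claim. The extra verifications you include (that $m_a$ is indeed a $\bm{\mu}_n$-equivariant pointed bijection, and the identification of the orbit set with $k^\times/\bm{\mu}_n$) are details the paper leaves implicit, but the core computation is identical.
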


\begin{proof}
Choose representatives $x_{1}, \ldots, x_{\frac{q-1}{n}}$ of $\bm{\mu}_{n}$-orbits in $k^{\times}$, and write, for $1\displaystyle \leq i\leq\frac{q-1}{n}$,
\begin{equation*}
x_{i}a=\mu_{i}x_{\sigma_{a}(i)};
\end{equation*}
taking the product of the above equations as $i$ varies we obtain $a^{\frac{q-1}{n}}=\displaystyle \prod_{i=1}^{\frac{q-1}{n}}\mu_{i}$, which implies the lemma in view of the above description of the abelianisation of $Aut(k)$ .
\end{proof}

\begin{rem}
For $n=2$ the previous statement is the classical Gauss's lemma. In general it is a restatement of the (well known) fact that transfer from $k^{\times}$ to $\bm{\mu}_{n}$ equals the $(q-1)/n$-th power map.
\end{rem}

\begin{notat}
If $X$ is a finite free pointed $\bm{\mu}_{n}$-set the composition $Aut(X) \rightarrow Aut(X)^{ab}\rightarrow \bm{\mu}_{n}$ will be denoted by $\Delta_{X}$.
\end{notat}

\begin{lem}
Let $(X,\ *), (Y,\ \star)$ be finite free pointed $\bm{\mu}_{n}$-sets. For every $f\in Aut(X)$ the equality
\begin{equation*}
\Delta_{X\times Y}(f\times Id)=\Delta_{X}(f)
\end{equation*}
holds.
\end{lem}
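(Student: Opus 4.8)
The plan is to compute $\Delta_{X\times Y}(f\times Id)$ directly from the definition, exploiting the fact established in the discussion of the abelianisation of $Aut(-)$: namely that $\Delta_W(g)$ is the product $\prod_i\mu_g(i)$ of the $\bm{\mu}_n$-components of $g$ taken over any choice of orbit representatives, and in particular is independent of that choice. This frees me to pick representatives of the $\bm{\mu}_n$-orbits of $\widetilde{X\times Y}$ that are adapted to the product structure.

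First I would describe the orbit set $\mathcal{O}_{X\times Y}$. Giving $X\times Y$ the basepoint $(*,\star)$ and the diagonal $\bm{\mu}_n$-action, the complement $\widetilde{X\times Y}$ splits into three $\bm{\mu}_n$-stable pieces: $\{*\}\times\tilde Y$, $\tilde X\times\{\star\}$, and $\tilde X\times\tilde Y$ (on this last piece the action is free). Fix representatives $x_1,\dots,x_t$ of $\mathcal{O}_X$ and $y_1,\dots,y_s$ of $\mathcal{O}_Y$. Then the orbits in the first piece are represented by the $(*,y_j)$, those in the second by the $(x_i,\star)$, and those in the third by the pairs $(x_i,y')$ with $1\le i\le t$ and $y'\in\tilde Y$: indeed every element of $\tilde X\times\tilde Y$ admits a unique $\bm{\mu}_n$-translate whose first coordinate is one of the $x_i$, and distinct such pairs lie in distinct orbits by freeness.

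Next I would evaluate $f\times Id$ on these representatives and read off the $\bm{\mu}_n$-factors, writing $f(x_i)=\mu_f(i)\,x_{\sigma_f(i)}$. On $(*,y_j)$ the map acts as the identity, since $f(*)=*$, contributing the trivial factor. On $(x_i,\star)$ it gives $(f(x_i),\star)=\mu_f(i)\cdot(x_{\sigma_f(i)},\star)$, so these representatives contribute exactly $\prod_i\mu_f(i)=\Delta_X(f)$. On $(x_i,y')$ it gives $(\mu_f(i)x_{\sigma_f(i)},y')=\mu_f(i)\cdot(x_{\sigma_f(i)},\mu_f(i)^{-1}y')$, so each such representative again contributes the factor $\mu_f(i)$; the product over the whole third piece is therefore $\prod_i\mu_f(i)^{|\tilde Y|}=\Delta_X(f)^{|\tilde Y|}$. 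Assembling the three blocks yields $\Delta_{X\times Y}(f\times Id)=\Delta_X(f)\cdot\Delta_X(f)^{|\tilde Y|}$.

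Finally, the point that makes everything collapse is the divisibility $n\mid|\tilde Y|$: because $Y$ is a \emph{free} pointed $\bm{\mu}_n$-set, $\tilde Y$ is a disjoint union of free orbits, so $|\tilde Y|=n\,|\mathcal{O}_Y|$, and since $\Delta_X(f)\in\bm{\mu}_n$ we get $\Delta_X(f)^{|\tilde Y|}=1$, leaving $\Delta_{X\times Y}(f\times Id)=\Delta_X(f)$. I expect the only delicate step to be the bookkeeping for the diagonal orbits $\tilde X\times\tilde Y$: one must renormalise $(f\times Id)(x_i,y')$ back to the chosen representative, which is precisely what produces the factor $\mu_f(i)$ for each $y'$, and the vanishing of this block rests entirely on freeness of the $\bm{\mu}_n$-action on $\tilde Y$.
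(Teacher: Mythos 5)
Your proof is correct and follows essentially the same route as the paper's: the same three-block decomposition of $\widetilde{X\times Y}$, the same choice of product-adapted orbit representatives, and the same observation that each $\mu_f(i)$ occurs $1+n\,|\mathcal{O}_Y|$ times, hence collapses to a single factor since $\mu_f(i)^n=1$. The only cosmetic difference is that you group the diagonal block's contribution as $\Delta_X(f)^{|\tilde Y|}=1$, whereas the paper counts the multiplicity of each $\mu_f(i)$ individually and uses $\mu^{1+un}=\mu$.
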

\begin{proof}
Choose representatives $x_{1}, \ldots, x_{t}$ (resp. $y_{1}, \ldots, y_{u}$) of $\bm{\mu}_{n}$-orbits in $\tilde{X}$ (resp. $\tilde{Y}$). Then we can write
\begin{equation*}
\widetilde{X\times Y}=\left(\coprod_{i=1}^{t}\bm{\mu}_{n}\cdot x_{i}\times\{\star\}\right)\coprod\left(\coprod_{j=1}^{u}\{*\}\times \bm{\mu}_{n}\cdot y_{j}\right)\coprod\left(\coprod_{i,j}\coprod_{\mu\in\bm{\mu}_{n}}\bm{\mu}_{n}\cdot(x_{i},\ \mu\cdot y_{j})\right).
\end{equation*}
Set $g=f\times Id$; we have $g(x_{i},\ \star)=\mu_{f}(i)(x_{\sigma_f(i)}, \star), \ g(*,\ y_{j})=(*,\ y_{j})$ and
\begin{equation*}
g(x_{i},\ \mu\cdot y_{j})=(\mu_{f}(i)x_{\sigma_{f}(i)},\ \mu\cdot y_{j})=\mu_{f}(i)\cdot (x_{\sigma_{f}(i)},\ \mu_{f}(i)^{-1}\mu\cdot y_{j}).
\end{equation*}
Therefore, denoting by $\mu(g)$ be the sequence of roots of unity attached to $g$, we see that for each $i$ such that $\mu_f(i) \neq 1$ the root of unity $\mu_{f}(i)$ contributes $1+un$ times to $\mu(g)$. As $\mu^{1+un}=\mu$ for each $\mu\in \bm{\mu}_{n}$, we find that
\begin{equation*}
\Delta_{X\times Y}(g)=\prod_{\mu\in\mu(g)}\mu=\prod_{i=1}^{t}\mu_{f}(i)=\Delta_{X}(f),
\end{equation*}
as we wanted.
\end{proof}

\begin{corol}
The maps $\Delta_{X}: Aut(X) \rightarrow \bm{\mu}_{n}$ induce a map $\Delta :  K_{1}(V_{\bm{\mu}_{n}})\rightarrow \bm{\mu}_{n}$.
\end{corol}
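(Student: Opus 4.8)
The plan is to exploit the description of $K_1$ as a filtered colimit of abelianised automorphism groups, and to read the statement as the assertion that the family $(\Delta_X)_X$ is a cocone on the diagram computing that colimit. By the Lemma recalled above from \cite{wei13} (Corollary 4.8.1), we have
\begin{equation*}
K_1(V_{\bm{\mu}_n}) = \varinjlim_{X} H_1(Aut(X),\ \mathbf{Z}) = \varinjlim_{X} Aut(X)^{ab},
\end{equation*}
the colimit being taken over the objects $X$ of $V_{\bm{\mu}_n}$ with transition maps induced by the stabilisation functors $(-)\times Y$. To produce $\Delta$ it therefore suffices, by the universal property of the colimit, to exhibit a compatible system of homomorphisms $Aut(X)^{ab}\rightarrow \bm{\mu}_n$.

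First I would observe that each $\Delta_X$ already descends to the abelianisation: indeed, by definition of $\Delta_X$ given in the Notation above, the map $\Delta_X : Aut(X)\rightarrow \bm{\mu}_n$ is the composite of the projection $Aut(X)\rightarrow Aut(X)^{ab}$ with a homomorphism $Aut(X)^{ab}\rightarrow \bm{\mu}_n$. Thus $\Delta_X$ canonically factors as a map $\overline{\Delta}_X : Aut(X)^{ab}=H_1(Aut(X),\ \mathbf{Z})\rightarrow \bm{\mu}_n$, giving one homomorphism for each object $X$.

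Next I would check compatibility with the transition maps, which is exactly the content of the preceding Lemma: for the stabilisation $Aut(X)\rightarrow Aut(X\times Y)$, $f\mapsto f\times Id$, the equality $\Delta_{X\times Y}(f\times Id)=\Delta_X(f)$ shows that the triangle formed by $\overline{\Delta}_X$, $\overline{\Delta}_{X\times Y}$ and the induced map $H_1(Aut(X),\ \mathbf{Z})\rightarrow H_1(Aut(X\times Y),\ \mathbf{Z})$ commutes. Since the symmetry isomorphism of the monoidal structure acts by an inner automorphism, and hence trivially on abelianisations, left and right stabilisation agree on $H_1$, so the family $(\overline{\Delta}_X)_X$ is a genuine cocone on the colimit diagram. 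The universal property of the filtered colimit then yields the desired $\Delta : K_1(V_{\bm{\mu}_n})\rightarrow \bm{\mu}_n$, restricting to $\overline{\Delta}_X$ on each $H_1(Aut(X),\ \mathbf{Z})$.

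The only point requiring care — and the step I expect to be the main obstacle, such as it is — is confirming that the transition maps appearing in the colimit of \cite{wei13} are precisely those induced by $f\mapsto f\times Id$ (up to the harmless symmetry swap handled above), so that the preceding Lemma applies verbatim. Once the shape of the colimit diagram is pinned down, the argument is entirely formal.
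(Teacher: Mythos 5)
Your proof is correct and is exactly the argument the paper intends: the corollary is stated without proof precisely because it follows from the colimit description $K_{1}(\mathcal{S})=\varinjlim_{s}H_{1}(Aut(s),\mathbf{Z})$ quoted from \cite{wei13}, the factorisation of each $\Delta_{X}$ through $Aut(X)^{ab}$ (built into its definition), and the compatibility lemma $\Delta_{X\times Y}(f\times Id)=\Delta_{X}(f)$. Your extra remark that left and right stabilisation agree on $H_{1}$ because the symmetry isomorphism acts by conjugation is a sensible point of care, handled correctly.
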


\subsection{} We have a monoidal functor
\begin{equation*}
V_{k}\rightarrow V_{\bm{\mu}_{n}}
\end{equation*}
sending a $k$-vector space to the underlying pointed $\bm{\mu}_n$-set, and inducing maps $K_{i}(k)\rightarrow K_{i}(V_{\bm{\mu}_{n}})$.

\begin{prop}
The composite $K_{1}(k)\rightarrow K_{1}(V_{\bm{\mu}_{n}})\xrightarrow{\Delta}\bm{\mu}_{n}$ coincides with the $(q-1)/n$-th power map.
\end{prop}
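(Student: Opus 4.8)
The plan is to follow the blueprint of the proof of Proposition \ref{legendre} almost verbatim, with the sign map replaced by $\Delta$ and the Legendre symbol replaced by the power map. First I would record that $K_1(k)=k^\times$ because $k$ is a field, and that the composite in the statement is induced on $\pi_1$ by the forgetful maps $f_r: GL_r(k)=Aut_k(k^r)\rightarrow Aut(k^r)$ (sending a $k$-linear automorphism to its underlying automorphism of pointed $\bm{\mu}_n$-sets), followed by the maps $\Delta_{k^r}$. Before doing anything else one has to check that the functor $V_k\rightarrow V_{\bm{\mu}_n}$ is genuinely monoidal, so that the induced $K_1$-map is computed by the $f_r$: the underlying pointed $\bm{\mu}_n$-set of $k^r\oplus k^s=k^{r+s}$ is the cartesian product of the underlying sets with the componentwise $\bm{\mu}_n$-action, the scalar action of $\bm{\mu}_n$ is free away from $0$, and the basepoint is preserved, so the functor carries $\oplus$ to $\times$.

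Since $\bm{\mu}_n$ is abelian, each composite $\Delta_{k^r}\circ f_r$ factors through the determinant $GL_r(k)\rightarrow k^\times$, producing a homomorphism $q_r: k^\times\rightarrow \bm{\mu}_n$. The multiplicativity property $\Delta_{X\times Y}(g\times Id)=\Delta_X(g)$ established above shows that the $q_r$ are compatible with stabilisation, hence glue to a single map $q: k^\times\rightarrow\bm{\mu}_n$, which is exactly the map $\Delta$ induces on $K_1(k)$. To identify $q$ I would write an arbitrary $M\in GL_r(k)$ as $M=DM_1$ with $D=\mathrm{diag}(\det M,1,\ldots,1)$ and $\det M_1=1$; since $q$ factors through $\det$, it is enough to evaluate $\Delta_{k^r}\circ f_r$ on the diagonal matrix $\mathrm{diag}(d,1,\ldots,1)$ for $d\in k^\times$, which gives $q(d)$.

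Finally, under the identification $k^r=k\times k^{r-1}$ the matrix $\mathrm{diag}(d,1,\ldots,1)$ is precisely $m_d\times Id$, so the multiplicativity property reduces the computation to $r=1$ and yields $\Delta_{k^r}(\mathrm{diag}(d,1,\ldots,1))=\Delta_k(m_d)$. By Lemma \ref{trasf.lem} the image of $m_d$ in $Aut(k)^{ab}=\bm{\mu}_n\times\mathbf{Z}/2\mathbf{Z}$ is $(d^{(q-1)/n},\varepsilon)$, so after projecting to the $\bm{\mu}_n$-factor we get $\Delta_k(m_d)=d^{(q-1)/n}$, and therefore $q$ is the $(q-1)/n$-th power map. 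I do not expect a genuine obstacle, since the arithmetic content is already packaged in Lemma \ref{trasf.lem}; the only points needing care are the verification that the forgetful functor is monoidal (so that the induced $K_1$-map is really computed by the $f_r$) and the remark that the $\mathbf{Z}/2\mathbf{Z}$-component of the abelianisation plays no role because $\Delta$ records only the $\bm{\mu}_n$-part.
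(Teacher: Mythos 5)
Your proposal is correct and follows essentially the same route as the paper: factor $\Delta_{k^r}\circ f_r$ through the determinant since $\bm{\mu}_n$ is abelian, use the stability property $\Delta_{X\times Y}(f\times Id)=\Delta_X(f)$ to reduce to $r=1$, and conclude by Lemma \ref{trasf.lem}. The extra details you supply (monoidality of the forgetful functor, the irrelevance of the $\mathbf{Z}/2\mathbf{Z}$-factor of $Aut(k)^{ab}$) are points the paper leaves implicit, but the argument is the same one.
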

\begin{proof}
For every $r\geq 1$ the composite of the maps $GL_{r}(k)\rightarrow Aut(k^{r})\xrightarrow{\Delta_{k^{r}}}\bm{\mu}_{n}$ must factor through $GL_{r}(k)^{ab}$, hence it sends matrices of determinant one to the identity. Since these maps are compatible as $r$ varies, we see as in the proof of Proposition \ref{legendre} that it suffices to show that the composite $GL_{1}(k)\rightarrow Aut(k)\xrightarrow{p_1}\bm{\mu}_{n}$ sends $a$ to $a^{\frac{q-1}{n}}$; this is the content of Lemma \ref{trasf.lem}.
\end{proof}

\subsection{The case $n=2$.} If $n=2$ the above construction recovers the Legendre symbol as coming from the map in $K$-theory induced by the forgetful functor sending a $k$-vector space to the underlying pointed set with an action of $\{\pm 1\}$. As explained above, the construction given in Section 2 comes instead from the forgetful functor sending a $k$-vector space to the underlying (pointed) set. The reason why the resulting maps on $K_{1}$ coincide is given by the following lemma.

\begin{lem}(cf. \cite[1.2.1]{ks})
Let $X$ be a finite free pointed $\bm{\mu}_{2}$-set and $f: X\rightarrow X$ an automorphism of X. Then the sign of the permutation of $X$ induced by $f$ equals $\Delta_{X}(f)$.
\end{lem}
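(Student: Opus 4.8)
The plan is to use the fact that both maps under comparison are group homomorphisms $Aut(X)\rightarrow\{\pm 1\}=\bm{\mu}_{2}$, and hence both factor through the abelianisation of $Aut(X)$ determined above. First I would observe that, as $f$ fixes the marked point $*$, the sign of $f$ viewed as a permutation of $X$ equals its sign as a permutation of $\tilde{X}=X\smallsetminus\{*\}$; denote this common value by $\mathrm{sgn}(f)\in\{\pm 1\}$. Pulling back the sign homomorphism $\Sigma(\tilde{X})\rightarrow\{\pm 1\}$ along the inclusion $Aut(X)\hookrightarrow\Sigma(\tilde{X})$ realises $\mathrm{sgn}$ as a homomorphism $Aut(X)\rightarrow\{\pm 1\}$. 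Since $\Delta_{X}$ is likewise a homomorphism to this abelian group, both $\mathrm{sgn}$ and $\Delta_{X}$ factor through $Aut(X)^{ab}\simeq\bm{\mu}_{2}\times\mathbf{Z}/2\mathbf{Z}$, so it suffices to check the equality $\mathrm{sgn}(f)=\Delta_{X}(f)$ on a generating set of this abelianisation.

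Using $Aut(X)\simeq\bm{\mu}_{2}^{\mathcal{O}_{X}}\rtimes\Sigma(\mathcal{O}_{X})$ and the description of the abelianisation map $q$, the abelianisation is generated by the classes of two elements: a single sign flip $s$, with $\mu(s)=(1,\ldots,-1,\ldots,1)$ and $\sigma_{s}=\mathrm{id}$, and a transposition $\tau\in\Sigma(\mathcal{O}_{X})$ of two orbits with trivial $\bm{\mu}_{2}$-part. (The element $s$ generates the $\bm{\mu}_{2}$-factor and $\tau$ the $\mathbf{Z}/2\mathbf{Z}$-factor; when $X$ has at most one orbit only the first case occurs and the statement is immediate.) I would then evaluate both sides on these two generators.

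For $s$ one has $\Delta_{X}(s)=\prod_{k}\mu(s)_{k}=-1$; and since the action of $\bm{\mu}_{2}$ is free, the orbit of the affected representative $x_{i}$ is exactly $\{x_{i},-x_{i}\}$, on which $s$ acts by the transposition $x_{i}\leftrightarrow -x_{i}$ while fixing all other elements of $\tilde{X}$, so $\mathrm{sgn}(s)=-1$. For $\tau$ one has $\Delta_{X}(\tau)=1$; and on $\tilde{X}$ the automorphism $\tau$ acts as $x_{i}\leftrightarrow x_{j}$ together with $-x_{i}\leftrightarrow -x_{j}$, a product of two transpositions, so $\mathrm{sgn}(\tau)=1$. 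This verifies the identity on generators and hence everywhere. There is no real obstacle; the one point to keep in mind is that freeness together with $n=2$ forces every orbit of $\tilde{X}$ to have exactly two elements, so that a permutation of orbits lifts to a ``doubled'' permutation of $\tilde{X}$ (each orbit-cycle of length $\ell$ becoming two disjoint $\ell$-cycles) and therefore has trivial sign. This doubling is exactly what renders the $\Sigma(\mathcal{O}_{X})$-part invisible to $\mathrm{sgn}$, leaving only the $\bm{\mu}_{2}$-contribution recorded by $\Delta_{X}$.
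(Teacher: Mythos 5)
Your proof is correct, but it is organised differently from the paper's. You exploit the fact that both $\mathrm{sgn}$ and $\Delta_{X}$ are homomorphisms into the abelian group $\{\pm 1\}$, so both factor through the abelianisation $Aut(X)^{ab}\simeq\bm{\mu}_{2}\times\mathbf{Z}/2\mathbf{Z}$ computed earlier in the paper via $Aut(X)\simeq\bm{\mu}_{2}^{\mathcal{O}_{X}}\rtimes\Sigma(\mathcal{O}_{X})$, and you then verify the equality on the two generating classes (a single sign flip $s$, and an orbit transposition $\tau$ with trivial $\bm{\mu}_{2}$-part), handling the degenerate case of at most one orbit separately. The paper instead argues by induction on the number $n(f)$ of signs $\varepsilon_{i}$ equal to $-1$: the base case $n(f)=0$ is your ``doubling'' observation (a pure orbit permutation has sign $sgn(\sigma_{f})^{2}=1$), and the inductive step composes $f$ with the transposition $x_{i}\leftrightarrow -x_{i}$, which flips both $\mathrm{sgn}$ and $\Delta_{X}$. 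The two arguments rest on exactly the same two elementary computations (a sign flip is a transposition of $\tilde{X}$; a permutation of orbits doubles into two disjoint cycles of equal length), but your reduction-to-generators route is shorter and reuses the structure theory the paper has already established, whereas the paper's induction is self-contained and never needs the description of $Aut(X)^{ab}$ (only the defining formula $\Delta_{X}(f)=\prod_{i}\mu_{f}(i)$), so it would survive even if one had skipped the abelianisation discussion.
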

\begin{proof}
Let $R=\{x_{1}, \ldots, x_{t}\}$ be a set of representatives of the $\bm{\mu}_{2}$-orbits in $\tilde{X}$. For $1\leq i\leq t$ we have $f(x_{i})=\varepsilon_i x_{\sigma_f(i)}$, for a unique permutation $\sigma_{f}$ of the set $\{x_{1},\ldots, x_{t}\}$ and unique signs $\varepsilon_i\in\{\pm 1\}$. For $1\leq i\leq t$ let $\sigma_{i}$ be the transposition exchanging $x_{i}$ with $-x_{i}$. We prove the lemma by induction on the number $n(f)$ of signs $\varepsilon_i$ which equal $-1$; notice that $\Delta_{X}(f)=(-1)^{n(f)}$.

If $n(f)=0$ then $f$ leaves $R$ and $-R$ stable; identifying $R$ with $-R$ via the map sending $x_{i}$ to $-x_{i}$, the map $f$ induces the same permutation $\sigma_{f}$ on the two sets; it follows that $sgn(f)=sgn(\sigma_{f})^{2}=1$. Now assume that $n(f)\geq 1$; pick $i$ such that $f(x_{i})=-x_{\sigma_{f}(i)}$. Then $g=f\circ\sigma_{i}$ sends $x_{i}$ to $x_{\sigma_{f}(i)},$ and $x_{j}$ to $\varepsilon_jx_{\sigma_f(j)}$ for $j\neq i$, hence $n(g)=n(f)-1$ and $\Delta_{X}(f)=-\Delta_{X}(g)$. By induction we have $\Delta_{X}(g)=sgn(g)$; on the other hand $sgn(g)=-sgn(f)$, hence $\Delta_{X}(f)=sgn(f)$.
\end{proof}

\begin{rem}
The outcome of the above discussion is that we have a family of maps
\begin{equation*}
h_{n}^{i} : K_{i}(K)\xrightarrow{\partial_{i-1}} K_{i-1}(k)\rightarrow K_{i-1}(V_{\bm{\mu}_{n}}),\ i\geq 1
\end{equation*}
such that $\Delta\circ h_{n}^{2}$ : $K_{2}(K)\rightarrow\bm{\mu}_{n}$ equals (the map induced by) the symbol $(\cdot, \cdot)_{n}$. If $F$ is a number field containing all the $n$-th roots of unity, we obtain a family of maps $h_{n,v}^{i}: K_{i}(F_{v})\rightarrow K_{i-1}(V_{\bm{\mu}_{n}})$, for $v$ finite place of $F$ lying above a rational prime not dividing $n$; notice that the maps $h_{n,v}^{i}$, as $v$ varies, have the same target. The local symbols at infinite places and places lying above factors of $n$ are however not seen by our construction.
\end{rem}

\section{The tame symbol via determinants and central extensions}

\subsection{Determinant of a finite free pointed $\bm{\mu}_{n}$-set.} Fix an integer $n\geq 1$. As recalled above, for a commutative ring $R$ the determinant maps $\det: GL_{r}(R)\rightarrow R^{\times}, r\geq 1$ induce a surjective map $K_{1}(R)\rightarrow R^{\times}$. Somewhat similarly, if $X$ is a finite free pointed $\bm{\mu}_{n}$-set, we have defined in the previous section a group morphism
\begin{equation*}
\Delta_{X}: Aut(X) \rightarrow \bm{\mu}_{n}
\end{equation*}
inducing a surjection $K_{1}(V_{\bm{\mu}_{n}})\rightarrow \bm{\mu}_{n}$. For $f\in Aut(X)$ we will call $\Delta_{X}(f)$ the {\it determinant} of $f$, and we will denote it from now on by $\det(f)$. It can also be described in terms of the determinant line of $X$, which we define as follows: let $\mathcal{O}_{X}=\{L_{1},\ldots, L_{t}\}$ be the set of $\bm{\mu}_{n}$-orbits in $\tilde{X}$; each $L_{i}$ is isomorphic (non canonically) to $\bm{\mu}_{n}$ as a $\bm{\mu}_{n}$-set; we call such a set a $\bm{\mu}_{n}$-line. Given two $\bm{\mu}_{n}$-lines $L,\ M$ we define their {\it tensor product} $L\otimes M$ as the quotient of the cartesian product $L\times M$ by the equivalence relation $(\mu\cdot l,\ m)\sim(l,\ \mu\cdot m)$ for $\mu\in\bm{\mu}_{n}$. Setting $\mu\cdot(l,\ m)=(\mu\cdot l,\ m)=(l,\ \mu\cdot m)$ makes $L\otimes M$ into a $\bm{\mu}_{n}$-line. Similarly, if $X,\ Y$ are finite free pointed $\bm{\mu}_{n}$-sets such that $\tilde{X}$ and $\tilde{Y}$ are $\bm{\mu}_{n}$-lines then we set $X\otimes Y=\{*\}\coprod (\tilde{X}\otimes\tilde{Y})$. This operation is commutative and associative (up to canonical isomorphism).

We define the {\it determinant line} of the finite free pointed $\bm{\mu}_{n}$-set $X$ as the pointed $\bm{\mu}_{n}$-set
\begin{equation*}
\det(X)=\{*\}\coprod(\otimes_{i=1}^{t}L_{i})
\end{equation*}
where $\mathcal{O}_{X}=\{L_{1},\ldots, L_{t}\}$ and we adopt the convention that the empty tensor product of $\bm{\mu}_{n}$-lines equals $\bm{\mu}_{n}$. Every $f\in Aut(X)$ gives rise to an automorphism of the determinant line $\det(X)$, which is given by multiplication by $\det(f)$.

If $k$ is a finite field of cardinality $n+1$ and $V$ is a finite dimensional $k$-vector space then $V$ has a natural structure of finite free pointed $\bm{\mu}_{n}$-set, and every $k$-linear automorphism $f: V\rightarrow V$ is an automorphism of $V$ as a $\bm{\mu}_{n}$-set; the following lemma ensures that the determinant of $f$ as a $k$-linear map coincides with its determinant as a $\bm{\mu}_{n}$-equivariant map. An alternative proof can be found in \cite[1.2.3]{ks}, where the above definition of determinant of a finite free pointed $\bm{\mu}_{n}$-set was first formulated.

\begin{lem}\label{2dets=1}
Let $k$ be a finite field and $V$ a finite dimensional $k$-vector space. Let $W$ be the tensor product of all the one dimensional $k$-subspaces of $V$. Then the natural map
\begin{equation*}
d :\ Aut_{k}(V)\rightarrow Aut_{k}(W)=k^{\times}
\end{equation*}
is the determinant map.
\end{lem}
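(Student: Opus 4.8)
The plan is to make the map $d$ completely explicit and then mimic the reduction used in the proof of Proposition \ref{legendre}: since the target $k^{\times}$ is abelian, it suffices to evaluate $d$ on a convenient generating set of $Aut_{k}(V)=GL(V)$. First I would record a usable formula for $d$. Write $V=k^{r}$ with basis $e_{1},\dots,e_{r}$, and let $\mathcal{L}$ be the finite set of one-dimensional subspaces of $V$, so that $W=\bigotimes_{L\in\mathcal{L}}L$ is one-dimensional and $g\in GL(V)$ acts on $W$ through the $k$-linear isomorphisms $L\xrightarrow{g}g(L)$. Choosing a nonzero $v_{L}$ in each line and writing $g(v_{L})=c_{L}(g)\,v_{g(L)}$ with $c_{L}(g)\in k^{\times}$, one gets
\[
d(g)=\prod_{L\in\mathcal{L}}c_{L}(g),
\]
and a one-line check shows this is independent of the $v_{L}$ (rescaling $v_{L}$ by $\lambda_{L}$ multiplies the product by $\prod_{L}\lambda_{L}/\prod_{L}\lambda_{g(L)}=1$, since $g$ permutes $\mathcal{L}$). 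In particular $d$ is a group homomorphism. The one principle I would isolate is the following: if $L_{0}\to L_{1}\to\cdots\to L_{s-1}\to L_{0}$ is a $g$-orbit in $\mathcal{L}$ of length $s$, then $\prod_{i}c_{L_{i}}(g)$ equals the scalar by which the line-preserving automorphism $g^{s}$ acts on $L_{0}$.

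Because $d$ is a homomorphism to the abelian group $k^{\times}$ and $GL(V)$ is generated by transvections together with the matrices $\mathrm{diag}(a,1,\dots,1)$ (recall $SL(V)$ is generated by transvections, and every $g$ equals $\mathrm{diag}(\det g,1,\dots,1)$ times an element of $SL(V)$), it is enough, exactly as in Proposition \ref{legendre}, to compute $d$ on a transvection and on $\mathrm{diag}(a,1,\dots,1)$. For a transvection $t$ of order $p=\mathrm{char}(k)$, every line in its fixed hyperplane is fixed with $c_{L}=1$, while each remaining line lies in a $g$-orbit of length exactly $p$; by the orbit principle each such orbit contributes the scalar of $t^{p}=\mathrm{id}$, namely $1$, so $d(t)=1$. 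For $g=\mathrm{diag}(a,1,\dots,1)$ the lines inside $\langle e_{2},\dots,e_{r}\rangle$ are fixed with $c_{L}=1$, the line $\langle e_{1}\rangle$ is fixed with $c=a$, and the remaining lines $\langle e_{1}+w\rangle$ are permuted by $w\mapsto a^{-1}w$ in orbits of length $\mathrm{ord}(a)$, on each of which $g^{\mathrm{ord}(a)}=\mathrm{id}$ contributes $1$; collecting everything gives $d(g)=a$.

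Finally I would assemble these computations: writing an arbitrary $g$ as $\mathrm{diag}(\det g,1,\dots,1)$ times a product of transvections and using multiplicativity of $d$ yields $d(g)=\det(g)$, which is the claim. I expect the only genuinely delicate point to be the transvection computation, i.e. the verification that $d$ is trivial on $SL(V)$: everything there hinges on the orbit principle and on the fact that a transvection permutes its non-fixed lines in free orbits under a cyclic group of order $p$, so that every orbit contribution collapses to $1$. The diagonal computation, by contrast, is routine orbit bookkeeping, simplified by the congruence $q^{r-1}\equiv 1 \pmod{q-1}$ that makes the a priori exponent of $a$ reduce to $1$. The small case $GL_{2}(\mathbf{F}_{2})$, where the commutator subgroup is strictly smaller than $SL$, is harmless since there $k^{\times}$ is trivial and both maps vanish.
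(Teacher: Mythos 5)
Your proof is correct, and its core computation coincides with the paper's, but your reduction step is genuinely different. The paper first argues structurally: since $k^{\times}$ is abelian, $d$ kills commutators, and $[GL_r(k),GL_r(k)]=SL_r(k)$ (the only exception, $GL_2(\mathbf{F}_2)$, being harmless as $k^{\times}$ is then trivial), so $d$ factors through $\det$ and is therefore a power $\det^{j}$; it then needs to evaluate $d$ on the single matrix $\mathrm{diag}(g,1,\dots,1)$ with $g$ a \emph{generator} of $k^{\times}$, which it does by exactly your orbit argument --- every line outside the two fixed loci lies in an orbit of length $q-1$, on whose tensor product $f$ acts as the scalar of $f^{q-1}=\mathrm{id}$, namely $1$, so only $[1,0,\dots,0]$ contributes and $d(f)=g$, forcing $j\equiv 1 \pmod{q-1}$. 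You avoid the abelianization input entirely by instead generating $GL(V)$ by transvections together with $\mathrm{diag}(a,1,\dots,1)$, at the cost of one extra case: triviality of $d$ on transvections, which your orbit principle handles because the non-fixed lines move in free orbits of length $p$ and $t^{p}=\mathrm{id}$. The diagonal computations in the two proofs are essentially identical (yours is done for arbitrary $a$ via orbits of length $\mathrm{ord}(a)$, the paper's for a generator). What your route buys is self-containedness: no appeal to $GL_r(k)^{ab}=k^{\times}$, and the $GL_2(\mathbf{F}_2)$ degeneration you flag is invisible to your argument since generation of $SL(V)$ by transvections holds there too. What the paper's route buys is brevity: one structural fact about $GL$ replaces the transvection case and reduces the whole lemma to a single matrix.
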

\begin{proof}
The map $d: Aut_{k}(V)\rightarrow Aut_{k}(W)=k^{\times}$ factors through the determinant map. Therefore $d$ is a power of the determinant, and to prove the lemma it suffices to check the following statement: fix a basis $(e_{1},\ldots, e_{t})$ of the $k$-vector space $V$ and let $f$ be the automorphism represented in this basis by the diagonal matrix $\mathrm{diag}(g, 1,\ldots, 1)$ where $g$ is a generator of $k^{\times}$; then $d(f)=g$. We will denote by $[a_{1},\ldots, a_{t}]$ the line in $V$ generated by the element with coordinates $(a_{1},\ldots, a_{t})\in k^{t}\smallsetminus \{0\}$ in the chosen basis. The map $f$ acts as multiplication by $g$ on $[1, 0,\ldots, 0]$, and it acts as the identity on every line of the form $[0, a_{2},\ldots, a_{t}]$. If $L=[a_{1},\ldots, a_{t}]$ with $a_{1}\neq 0$ and $a_{i}\neq 0$ for some $2\leq i\leq t$ then we can write $L=\left[\displaystyle \frac{a_{1}}{a_{i}}, \frac{a_{2}}{a_{i}}, \ldots, \frac{a_{t}}{a_{i}}\right]$. Let $m$ be the cardinality of $k^{\times}$; for $1\leq j\leq m-1$ the line $f^{j}(L)=\left[g^{j}\displaystyle \frac{a_{1}}{a_{i}}, \frac{a_{2}}{a_{i}}, \ldots, \frac{a_{t}}{a_{i}}\right]$ is distinct from $L$. This implies that $f$ acts as the identity on $\otimes_{j=0}^{m-1}f^{j}(L)$; it follows that the only non-trivial contribution to $d(f)$ comes from the action of $f$ on $[1, 0,\ldots, 0]$, hence $d(f)=g$.
\end{proof}

\begin{lem}(cf. \cite[2.0.5]{ks})
Let $f: (X,\ *)\rightarrow(Y,\ \star)$ be a map of finite free pointed $\bm{\mu}_{n}$-sets such that $f^{-1}(\star)=*$ and the cardinality of $f^{-1}(y)$ is congruent to one modulo $n$ for every $y\in\tilde{Y}$. Then there is a canonical isomorphism $\det(Y)\xrightarrow{\sim}\det(X)$.
\end{lem}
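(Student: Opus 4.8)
\subsection*{Proof proposal}

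The plan is to read $f$ as a $\bm{\mu}_{n}$-equivariant pointed map and to reduce the construction of the determinant isomorphism to a single choice-free observation about tensor powers of a $\bm{\mu}_{n}$-line. First I would record the combinatorial structure of $f$. Since $f^{-1}(\star)=*$, the map $f$ restricts to a $\bm{\mu}_{n}$-equivariant map $\tilde{X}\to\tilde{Y}$. If $M\in\mathcal{O}_{X}$ is an orbit, then $f(M)$ is a $\bm{\mu}_{n}$-orbit contained in $\tilde{Y}$, hence equal to a single $L\in\mathcal{O}_{Y}$ (the action on $\tilde{Y}$ is free), and $f|_{M}\colon M\to L$ is an equivariant map between $\bm{\mu}_{n}$-torsors, hence a bijection. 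Grouping the orbits of $X$ by their image thus gives a partition $\mathcal{O}_{X}=\coprod_{L\in\mathcal{O}_{Y}}\{M : f(M)=L\}$. For $y$ in the orbit $L$ one has $|f^{-1}(y)|=s_{L}$, where $s_{L}$ is the number of orbits of $X$ lying over $L$; the hypothesis says precisely that $s_{L}\equiv 1\pmod{n}$ for every $L$ (in particular $s_{L}\geq 1$, so every orbit of $Y$ is hit, and $f$ is surjective on orbits).

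The key step, which I expect to be the only nontrivial input, is the following choice-free isomorphism: for a $\bm{\mu}_{n}$-line $L$ and an integer $s\equiv 1\pmod{n}$, the diagonal map $\delta_{L}\colon L\to L^{\otimes s}$, $l\mapsto l\otimes\cdots\otimes l$ ($s$ factors), is a canonical $\bm{\mu}_{n}$-equivariant isomorphism. Equivariance is exactly where the congruence enters: from $\mu\cdot(a\otimes b)=(\mu a)\otimes b=a\otimes(\mu b)$ one finds $(\mu\cdot l)^{\otimes s}=\mu^{s}\,(l^{\otimes s})=\mu\,(l^{\otimes s})$, using $\mu^{n}=1$ and $s\equiv 1$. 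As an equivariant map between $\bm{\mu}_{n}$-torsors of the same cardinality $n$, it is automatically bijective. The point is that this upgrades the non-canonical abstract isomorphism $L^{\otimes s}\cong\bm{\mu}_{n}\cong L$ (which would require trivialising $L$) to a canonical one, precisely because $\delta_{L}$ is basepoint-free.

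Finally I would assemble the isomorphism. Using commutativity and associativity of $\otimes$ up to canonical isomorphism, as already noted in the text, I write $\bigotimes_{M\in\mathcal{O}_{X}}M \cong \bigotimes_{L\in\mathcal{O}_{Y}}\bigl(\bigotimes_{M:f(M)=L}M\bigr)$. On each inner factor the bijections $f|_{M}\colon M\xrightarrow{\sim}L$ induce $\bigotimes_{M:f(M)=L}M\xrightarrow{\sim}L^{\otimes s_{L}}$, and $\delta_{L}^{-1}$ then identifies $L^{\otimes s_{L}}$ with $L$. Tensoring over $L$ produces $\bigotimes_{M\in\mathcal{O}_{X}}M\xrightarrow{\sim}\bigotimes_{L\in\mathcal{O}_{Y}}L$, which (extending by $*\mapsto *$) is an isomorphism $\det(X)\xrightarrow{\sim}\det(Y)$; inverting gives the asserted $\det(Y)\xrightarrow{\sim}\det(X)$. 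Since each ingredient — the restrictions of $f$, the diagonal maps $\delta_{L}$, and the symmetry isomorphisms of $\otimes$ — involves no choices, the composite is canonical; the one formal point worth checking is that the grouping and the symmetry constraints introduce no hidden dependence on the ordering of the orbits, which holds because those constraints are themselves canonical.
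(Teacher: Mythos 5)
Your proposal is correct and is essentially the paper's own proof: the paper constructs, for each orbit $L\subset\tilde{Y}$ with preimage orbits $M_1,\dots,M_r$, the map $l\mapsto f_1^{-1}(l)\otimes\cdots\otimes f_r^{-1}(l)$, which is precisely the inverse of your composite $\delta_L^{-1}\circ\bigl(\bigotimes_{M}f|_{M}\bigr)$, with the congruence $r\equiv 1\pmod n$ used in exactly the same way to get $\bm{\mu}_n$-equivariance of the diagonal. Your explicit isolation of the diagonal map $\delta_L$ and the remark on independence of orbit ordering are just a slightly more detailed packaging of the same argument.
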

\begin{proof}
Let $L\subset\tilde{Y}$ be a $\bm{\mu}_{n}$-orbit; the preimage $f^{-1}(L)$ can be written as a disjoint union
\begin{equation*}
f^{-1}(L)=M_{1}\coprod M_{2}\cdots\coprod M_{r}
\end{equation*}
where each $M_{i}$ is a $\bm{\mu}_{n}$-orbit. Furthermore $r\equiv 1\pmod n$. For every $1\leq i\leq r$ the map $f$ induces an isomorphism of $\bm{\mu}_{n}$-sets $f_{i}: M_{i}\rightarrow L$, hence we obtain a $\bm{\mu}_{n}$-equivariant map $L\xrightarrow{(f_{1}^{-1}, \cdot\cdot\cdot, f_{r}^{-1})} \displaystyle \prod_{i=1}^{r}M_{i}$. Since $r\equiv 1 \pmod n$ this map induces an isomorphism of $\bm{\mu}_{n}$-sets $L\simeq\otimes_{i=1}^{r}M_{i}$. Taking the tensor product over all $\bm{\mu}_{n}$-orbits in $\tilde{Y}$ we obtain the desired isomorphism $\det(Y)\simeq\det(X)$ .
\end{proof}

\subsection{Exact sequences.}  Let $Y\hookrightarrow X$ be a $\bm{\mu}_{n}$-equivariant injection of finite free pointed $\bm{\mu}_{n}$-sets. Let $X//Y$ be the set obtained identifying every element of $Y$ with the marked point $*$ of $X$. The induced $\bm{\mu}_{n}$-action makes $X//Y$ a finite free pointed $\bm{\mu}_{n}$-set; the bijection $\{\bm{\mu}_{n}-\text{lines in } \tilde{X}\} = \{\bm{\mu}_{n}-\text{lines in } \tilde{Y}\}\coprod \{\bm{\mu}_{n}-\text{lines in } \widetilde{X//Y}\}$ induces a canonical isomorphism
\begin{equation*}
\det(X)=\det(Y)\otimes\det(X//Y).
\end{equation*}
The following important corollary shows that in suitable circumstances the determinant of finite free pointed $\bm{\mu}_{n}$-sets behaves in exact sequences as its classical linear-algebraic counterpart.

\begin{corol}\label{det-ex-sq}
Let $X, Y, Z$ be finite abelian groups with a $\bm{\mu}_{n}$-action making them finite free pointed $\bm{\mu}_{n}$-sets (marked points being the identity elements). Let $0\rightarrow X\rightarrow Y\rightarrow Z\rightarrow 0$ be an exact sequence of abelian groups with $\bm{\mu}_{n}$-equivariant maps. Then there is a canonical isomorphism
\begin{equation*}
\det(X)\otimes\det(Z)=\det(Y).
\end{equation*}
\end{corol}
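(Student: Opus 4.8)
The plan is to reduce the statement to the two results immediately preceding it: the additivity of $\det$ along the collapse $X//Y$ proved in the ``Exact sequences'' paragraph, and the invariance lemma for maps whose nonzero fibres have cardinality $\equiv 1 \pmod n$. Since the maps in $0\rightarrow X\rightarrow Y\rightarrow Z\rightarrow 0$ are $\bm{\mu}_{n}$-equivariant, the image of $X$ is a $\bm{\mu}_{n}$-stable subgroup of $Y$, so the inclusion $X\hookrightarrow Y$ is a $\bm{\mu}_{n}$-equivariant injection of finite free pointed $\bm{\mu}_{n}$-sets (each marked point being $0$). Applying the isomorphism of the ``Exact sequences'' paragraph to this inclusion gives a canonical identification $\det(Y)=\det(X)\otimes\det(Y//X)$, so it suffices to produce a canonical isomorphism $\det(Z)\xrightarrow{\sim}\det(Y//X)$.

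To this end I would use the quotient map $\pi\colon Y\twoheadrightarrow Z$. Since $\pi$ kills $X$, it sends every element of $X$ to the marked point $0\in Z$; as $Y//X$ is by definition $Y$ with $X$ collapsed to the marked point, $\pi$ factors through a $\bm{\mu}_{n}$-equivariant pointed map $\phi\colon Y//X\rightarrow Z$. I then check that $\phi$ meets the hypotheses of the fibre lemma. First, $\phi^{-1}(0)=*$, because $\pi^{-1}(0)=X$ is exactly what was collapsed. Second, for $z\in\tilde{Z}$ choose $y_{0}\in Y$ with $\phi(y_{0})=z$; then $\phi^{-1}(z)$ is the full coset $y_{0}+X$, which has cardinality $|X|$. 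Because $\bm{\mu}_{n}$ acts freely on $\tilde{X}=X\smallsetminus\{0\}$, we have $n\mid|\tilde{X}|=|X|-1$, i.e. $|X|\equiv 1\pmod n$; hence every nonzero fibre of $\phi$ has cardinality $\equiv 1\pmod n$. The lemma then yields the desired canonical isomorphism $\det(Z)\xrightarrow{\sim}\det(Y//X)$, and combining it with the identification above gives $\det(X)\otimes\det(Z)\cong\det(Y)$.

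The subtle point, and the only place where any real content enters, is that $Y//X$ and $Z$ are genuinely different pointed $\bm{\mu}_{n}$-sets: the map $\phi$ is highly non-injective, its nonzero fibres being entire cosets of $X$. The whole force of the argument is that the determinant cannot detect this difference, precisely because those fibres have cardinality $\equiv 1\pmod n$; verifying that congruence --- which comes down to the freeness of the $\bm{\mu}_{n}$-action on $\tilde{X}$ --- and correctly matching the roles of the three sets in the two preceding lemmas is the crux. Everything else (that $X$ is $\bm{\mu}_{n}$-stable, that $Y//X$ is again a finite free pointed $\bm{\mu}_{n}$-set, and that $\phi$ is well defined and equivariant) is routine bookkeeping inherited from the corresponding properties of $Y$.
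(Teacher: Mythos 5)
Your proof is correct and takes essentially the same route as the paper: both decompose $\det(Y)$ as $\det(X)\otimes\det(Y//X)$ via the collapse construction, then apply the fibre lemma to the induced map $Y//X\rightarrow Z$, whose nonzero fibres are cosets of $X$ and hence have cardinality $\equiv 1\pmod n$ by freeness of the $\bm{\mu}_{n}$-action on $\tilde{X}$. The verifications you spell out (well-definedness and equivariance of $\phi$, the fibre computation) are exactly the details the paper leaves implicit.
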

\begin{proof}
On the one hand we have a canonical isomorphism $\det(Y)=\det(X)\otimes\det(Y//X)$; on the other hand the induced map $Y//X\rightarrow Z$ satisfies the assumption of the previous lemma because $X$ is a finite free pointed $\bm{\mu}_{n}$-set hence its cardinality is congruent to one modulo $n$.
\end{proof}

\subsection{Duality.} Let $X$ be a pointed $\bm{\mu}_{n}$-set such that $\tilde{X}$ is a $\bm{\mu}_{n}$-line. We define its {\it dual} as $X^{\vee}=\{*\}\coprod \mathrm{Hom}(\tilde{X}, \bm{\mu}_{n})$, where $\mathrm{Hom}$ denotes the set of $\bm{\mu}_{n}$-equivariant maps. The action of an element $\mu \in \bm{\mu}_n$ sends $g:\tilde{X} \rightarrow \bm{\mu_n}$ to the map $x \mapsto \mu \cdot g(x)$. A $\bm{\mu}_{n}$-equivariant isomorphism $f: X\rightarrow Y$ induces a dual isomorphism $f^{\vee}: Y^{\vee}\rightarrow X^{\vee}$. We have a canonical isomorphism
\begin{align*}
\tilde{X}\otimes\widetilde{X^{\vee}}&\rightarrow\bm{\mu}_{n}\\
(v, g)&\mapsto g(v)
\end{align*}
inducing an isomorphism
\begin{equation}\label{dualiso}
X\otimes X^{\vee}\simeq\{*\}\coprod\bm{\mu}_{n}.
\end{equation}

\subsection{Commensurability.} We say that two subgroups $A,\ B$ of an abelian group $V$ are commensurable if the quotients $A/A\cap B$ and $B/A\cap B$ are finite. Commensurability is an equivalence relation on the set of all subgroups of $V$. We will mainly be interested in the case when $V$ is a finite dimensional vector space over a mixed characteristic local field $K$.

\begin{lem}\label{lem-commens}
Let $p$ be a prime, $k\geq 1$ an integer and $A\subset \mathbf{Q}_{p}^{k}$ a subgroup. The following assertions are equivalent:
\begin{enumerate}
\item $A$ is compact and open;
\item $A$ is commensurable with $\mathbf{Z}_{p}^{k}$;
\item $A$ is a free $\mathbf{Z}_{p}$-module of rank $k$.
\end{enumerate}
In particular any two compact open subgroups of $\mathbf{Q}_{p}^{k}$ are commensurable.
\end{lem}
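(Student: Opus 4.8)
The plan is to prove the three conditions equivalent by establishing the cyclic chain $(1) \Rightarrow (2) \Rightarrow (3) \Rightarrow (1)$, using throughout only the basic topology of $\mathbf{Q}_p^k$: it is locally compact, $\mathbf{Z}_p^k$ is a compact open subgroup, a subset is compact exactly when it is closed and bounded, and the subgroups $p^N\mathbf{Z}_p^k$ (for $N \in \mathbf{Z}$) form a neighbourhood basis of $0$. The ``in particular'' clause will then follow formally from transitivity of commensurability, already recorded above.

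For $(1) \Rightarrow (2)$ I would use openness of $A$ to find $N \geq 0$ with $p^N\mathbf{Z}_p^k \subseteq A$, and compactness (hence boundedness) to find $M \geq 0$ with $A \subseteq p^{-M}\mathbf{Z}_p^k$. Writing $B = A \cap \mathbf{Z}_p^k$, the inclusion $p^N\mathbf{Z}_p^k \subseteq B$ realises $\mathbf{Z}_p^k/B$ as a quotient of $(\mathbf{Z}/p^N\mathbf{Z})^k$, while the second isomorphism theorem identifies $A/B$ with $(A+\mathbf{Z}_p^k)/\mathbf{Z}_p^k \subseteq p^{-M}\mathbf{Z}_p^k/\mathbf{Z}_p^k \cong (\mathbf{Z}/p^M\mathbf{Z})^k$; both are finite, so $A$ is commensurable with $\mathbf{Z}_p^k$.

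The implication $(2) \Rightarrow (3)$ is where the real content lies. Set again $B = A \cap \mathbf{Z}_p^k$. Finiteness of $[\mathbf{Z}_p^k : B] = N$ forces $N\mathbf{Z}_p^k \subseteq B$, and since the prime-to-$p$ part of $N$ is a unit in $\mathbf{Z}_p$ this yields $p^a\mathbf{Z}_p^k \subseteq B$ for some $a$; thus $B$, and a fortiori $A$, is open, hence also closed (open subgroups of a topological group are closed). The crucial point — the step I expect to be the main obstacle, since a priori $A$ is only a subgroup, i.e. a $\mathbf{Z}$-module — is that a closed subgroup of $\mathbf{Q}_p^k$ is automatically a $\mathbf{Z}_p$-submodule: given $a \in A$ and $\lambda \in \mathbf{Z}_p$, choose integers $c_n \to \lambda$ (density of $\mathbf{Z}$ in $\mathbf{Z}_p$), observe $c_n a \in A$, and pass to the limit $\lambda a = \lim_n c_n a \in A$ by closedness. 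Finiteness of $[A:B] = m'$ gives $m' A \subseteq B \subseteq \mathbf{Z}_p^k$, whence $A \subseteq p^{-a'}\mathbf{Z}_p^k$ with $p^{a'}$ the $p$-part of $m'$; as a $\mathbf{Z}_p$-submodule of a finitely generated module over the Noetherian ring $\mathbf{Z}_p$, $A$ is finitely generated, and being torsion-free it is free. Finally $p^a\mathbf{Z}_p^k \subseteq A \subseteq p^{-a'}\mathbf{Z}_p^k$ sandwiches $A$ between two free modules of rank $k$, forcing $\mathrm{rank}_{\mathbf{Z}_p} A = k$ (for instance after applying $-\otimes_{\mathbf{Z}_p}\mathbf{Q}_p$).

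For $(3) \Rightarrow (1)$ I would take a $\mathbf{Z}_p$-basis $v_1, \ldots, v_k$ of $A$ and check it is $\mathbf{Q}_p$-linearly independent: any $\mathbf{Q}_p$-linear relation can be scaled to have coefficients in $\mathbf{Z}_p$, hence is a $\mathbf{Z}_p$-linear relation and so trivial. Since $\dim_{\mathbf{Q}_p}\mathbf{Q}_p^k = k$, the $v_i$ form a $\mathbf{Q}_p$-basis, so there is $g \in GL_k(\mathbf{Q}_p)$ with $A = g(\mathbf{Z}_p^k)$; as $g$ acts as a homeomorphism of $\mathbf{Q}_p^k$ and $\mathbf{Z}_p^k$ is compact and open, so is $A$. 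This closes the cycle. The final assertion follows: each compact open subgroup is commensurable with $\mathbf{Z}_p^k$ by the equivalence just proved, so any two are commensurable with each other by transitivity of the relation.
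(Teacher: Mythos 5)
Your proof is correct, and it follows the same overall plan as the paper's: the cyclic chain $(1)\Rightarrow(2)\Rightarrow(3)\Rightarrow(1)$, with the final claim deduced from transitivity of commensurability. The difference is one of rigor rather than strategy, and it works in your favour. In $(1)\Rightarrow(2)$ the paper argues via ``open subgroup of a compact group has compact discrete, hence finite, quotient'', where you use the neighbourhood basis $p^N\mathbf{Z}_p^k$ together with boundedness and the second isomorphism theorem; both are fine. In $(2)\Rightarrow(3)$ the paper simply asserts that a finite-index subgroup of $\mathbf{Z}_p^k$ is free of rank $k$, and then that $A$, squeezed between $A\cap\mathbf{Z}_p^k$ and $\frac{1}{m}\left(A\cap\mathbf{Z}_p^k\right)$, is free of rank $k$; both assertions tacitly use that these subgroups are $\mathbf{Z}_p$-\emph{submodules} and not merely subgroups, which is exactly the point you isolate and prove via open $\Rightarrow$ closed $\Rightarrow$ stable under $\mathbf{Z}_p$ by density of $\mathbf{Z}$ in $\mathbf{Z}_p$. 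Your flagging of this as the main obstacle is accurate. Likewise in $(3)\Rightarrow(1)$: the paper says $A$ is ``homeomorphic to $\mathbf{Z}_p^k$ hence compact open'', but homeomorphism type alone cannot detect openness (for instance $\mathbf{Z}_p\times\{0\}\subset\mathbf{Q}_p^2$ is homeomorphic to $\mathbf{Z}_p^2$, both being Cantor sets, yet is not open); your argument --- a $\mathbf{Z}_p$-basis of $A$ is automatically $\mathbf{Q}_p$-linearly independent, hence a basis of $\mathbf{Q}_p^k$, so $A=g(\mathbf{Z}_p^k)$ for some $g\in GL_k(\mathbf{Q}_p)$ acting as an ambient homeomorphism --- is what actually delivers openness. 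So your write-up can be read as the paper's proof with its two implicit steps made explicit and correct.
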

\begin{proof}
Assume that $A$ is compact open; then $A\cap \mathbf{Z}_{p}^{k}$ is open, so $\mathbf{Z}_{p}^{k}/A\cap \mathbf{Z}_{p}^{k}$ is compact and discrete, hence finite. Similarly $A/A\cap \mathbf{Z}_{p}^{k}$ is finite, so $A$ is commensurable with $\mathbf{Z}_{p}^{k}$. Now assume that $A$ is commensurable with $\mathbf{Z}_{p}^{k}$; then $A\cap \mathbf{Z}_{p}^{k}$ has finite index in $\mathbf{Z}_{p}^{k}$ hence it is a free $\mathbf{Z}_{p}$-module of rank $k$. Letting $m$ be the cardinality of $A/A\cap \mathbf{Z}_{p}^{k}$ we have $A\displaystyle \cap \mathbf{Z}_{p}^{k}\subset A\subset\frac{1}{m}(A\cap \mathbf{Z}_{p}^{k})$, therefore $A$ is a free $\mathbf{Z}_{p}$-module of rank $k$. Finally, if $A\subset \mathbf{Q}_{p}^{k}$ is a free $\mathbf{Z}_{p}$-module of rank $k$ then it homeomorphic to $\mathbf{Z}_{p}^{k}$ hence compact open. This proves the equivalence of $(1), (2), (3)$; the last statement follows because commensurability is an equivalence relation.
\end{proof}

\subsection{} We fix from now on a local field $K$ of characteristic zero, with ring of integers $\mathcal{O}$ and residue field $k$ of cardinality $q$. As above, we denote by $v$ the valuation on $K$ and by $\pi$ a uniformiser of $K$. We also fix an integer $n$ dividing $q-1$. Every $\mathcal{O}$-module acquires a natural structure of pointed $\bm{\mu}_{n}$-set via the inclusion $\bm{\mu}_{n}\subset \mathcal{O}$ (the distinguished point being the zero element). Better, we have the following
\begin{lem}\label{O-mod=free}
Let $V$ be a finite $\mathcal{O}$-module; the natural structure of $\bm{\mu}_{n}$-set on $V$ makes it a finite free pointed $\bm{\mu}_{n}$-set.
\end{lem}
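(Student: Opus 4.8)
The plan is to show that the natural $\bm{\mu}_n$-action on a finite $\mathcal{O}$-module $V$ is free away from the zero element; freeness of the action is the only thing to check, since $V$ is already finite and pointed by hypothesis. Concretely, I must verify that for every nonzero $x \in V$ and every $\zeta \in \bm{\mu}_n$, the equality $\zeta \cdot x = x$ forces $\zeta = 1$.

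First I would unwind what freeness means: it suffices to prove that the only element of $\bm{\mu}_n$ fixing a nonzero vector is the identity, i.e. that $(\zeta - 1)x = 0$ with $x \neq 0$ implies $\zeta = 1$. The key observation is that since $n \mid q-1$ and $\bm{\mu}_n \subset \mathcal{O}^\times$ reduces isomorphically onto the $n$-th roots of unity in the residue field $k$, any two distinct $n$-th roots of unity $\zeta, \zeta'$ have a difference $\zeta - \zeta'$ which is a \emph{unit} in $\mathcal{O}$: indeed $\zeta - \zeta' \pmod{\pi}$ is a difference of distinct elements of $\bm{\mu}_n \subset k^\times$, hence nonzero in $k$, so $\zeta - \zeta' \notin \pi\mathcal{O}$ and is therefore invertible. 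Thus for $\zeta \neq 1$ the element $\zeta - 1$ is a unit of $\mathcal{O}$.

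The main step is then immediate: if $\zeta \cdot x = x$ for some nonzero $x \in V$ and $\zeta \neq 1$, then $(\zeta-1)x = 0$, and multiplying by the inverse of the unit $\zeta - 1$ gives $x = 0$, a contradiction. Hence the stabiliser of any nonzero element is trivial, and the action of $\bm{\mu}_n$ on $V \smallsetminus \{0\}$ is free. Combined with finiteness of $V$ and the fact that $0$ is fixed, this shows $V$ is a finite free pointed $\bm{\mu}_n$-set, as required.

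I do not expect any genuine obstacle here; the only point requiring care is the invertibility of differences of distinct roots of unity, which is precisely where the hypothesis $n \mid q-1$ enters (ensuring the $n$-th roots of unity remain distinct modulo $\pi$). Everything else is formal. One could phrase the argument slightly more structurally by noting that $\mathcal{O}[\bm{\mu}_n]$ maps to $\mathcal{O}$ with the augmentation ideal generated by the units $\zeta - 1$, but the elementary one-line verification above is cleaner and self-contained.
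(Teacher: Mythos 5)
Your proof is correct and takes essentially the same approach as the paper: for $\zeta \neq 1$ in $\bm{\mu}_n$, the element $\zeta - 1$ reduces to a nonzero element of $k$, hence is a unit in $\mathcal{O}$, so the relation $(\zeta - 1)x = 0$ forces $x = 0$. The only difference is that you spell out the unit claim in slightly more detail; the core argument is identical.
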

\begin{proof}
If $\mu\neq 1\in \bm{\mu}_{n}$ then $\mu-1$ is non-zero modulo $\pi$, hence a unit in $\mathcal{O}$. If $v\in V$ and $\mu v=v$ then $(\mu-1)v=0$, hence $v=0$. Therefore the action of $\bm{\mu}_{n}$ on $V\smallsetminus \{0\}$ is free.
\end{proof}

\subsection{} In view of the previous lemma for every finite $\mathcal{O}$-module $V$ we may consider the determinant $\det(V)$ of $V$ as a finite free pointed $\bm{\mu}_{n}$-set; likewise, any $\mathcal{O}$-linear automorphism $f$ of $V$ is $\bm{\mu}_{n}$-equivariant, hence we can consider its determinant $\det(f)$. Lemma \ref{2dets=1} ensures that this agrees with the usual definition if $V$ is a $k$-vector space {\it and} $n=q-1$ (but not in general for other values of $n$). Corollary \ref{det-ex-sq} yields the following result, which will be the key formal property needed in our arguments.

\begin{corol}\label{det-key}
Let $0\rightarrow X\rightarrow Y\rightarrow Z\rightarrow 0$ be an exact sequence of finite $\mathcal{O}$-modules. Then there is a canonical isomorphism
\begin{equation*}
\det(X)\otimes\det(Z)=\det(Y).
\end{equation*}
\end{corol}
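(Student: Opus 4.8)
The plan is to deduce this directly from Corollary~\ref{det-ex-sq}, which already establishes exactly the same multiplicativity of $\det$ for a short exact sequence of finite abelian groups carrying a $\bm{\mu}_{n}$-action that makes each of them a finite free pointed $\bm{\mu}_{n}$-set. All the genuine content—the compatibility of $\det$ with the operation $Y//X$, and the fact that the induced map $Y//X\rightarrow Z$ has fibres of cardinality $\equiv 1 \pmod n$—has been packaged into that corollary. The only thing left to do is to check that the situation of finite $\mathcal{O}$-modules is a special case of that abstract setting.

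First I would observe that a finite $\mathcal{O}$-module $V$ is in particular a finite abelian group, and that by Lemma~\ref{O-mod=free} the natural $\bm{\mu}_{n}$-action coming from the inclusion $\bm{\mu}_{n}\subset\mathcal{O}$ makes $V$ a finite free pointed $\bm{\mu}_{n}$-set, with marked point the zero element. Applying this to each of $X$, $Y$, $Z$ places us in precisely the hypotheses on the three objects required by Corollary~\ref{det-ex-sq}.

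Next I would check that the morphisms of the sequence are $\bm{\mu}_{n}$-equivariant. This is automatic: any $\mathcal{O}$-linear map commutes with multiplication by an element $\mu\in\bm{\mu}_{n}\subset\mathcal{O}$, so the maps $X\rightarrow Y$ and $Y\rightarrow Z$ are $\bm{\mu}_{n}$-equivariant and preserve the marked points. Hence $0\rightarrow X\rightarrow Y\rightarrow Z\rightarrow 0$ is an exact sequence of abelian groups with $\bm{\mu}_{n}$-equivariant maps, all the hypotheses of Corollary~\ref{det-ex-sq} are satisfied, and the asserted canonical isomorphism $\det(X)\otimes\det(Z)=\det(Y)$ is exactly its conclusion.

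There is essentially no obstacle to overcome at this stage, since the corollary is a clean specialisation of Corollary~\ref{det-ex-sq}. The only point worth stressing is that the freeness of the $\bm{\mu}_{n}$-action on each module is not a formality but relies on Lemma~\ref{O-mod=free}, and this is precisely where the standing hypothesis $n\mid q-1$ enters: for $1\neq\mu\in\bm{\mu}_{n}$ the element $\mu-1$ reduces to a nonzero element of $k$ and is therefore a unit in $\mathcal{O}$, which forces the action on $V\smallsetminus\{0\}$ to be free.
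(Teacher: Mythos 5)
Your proposal is correct and matches the paper's own (implicit) argument exactly: the paper likewise obtains Corollary~\ref{det-key} by specialising Corollary~\ref{det-ex-sq}, using Lemma~\ref{O-mod=free} for freeness of the $\bm{\mu}_{n}$-action and the fact that $\mathcal{O}$-linear maps are automatically $\bm{\mu}_{n}$-equivariant. Nothing is missing.
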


\subsection{}\label{fixdata} Fix from now on an integer $m\geq 1$. We set $K^{m}=V$ and $\mathcal{O}^{m}=V^{+}$. Two $\mathcal{O}$-submodules of $V$ will be called commensurable if they are commensurable as abelian groups. For example, by Lemma \ref{lem-commens} for every $f\in GL_{m}(K)$ the group $f(V^{+})$ is commensurable with $V^{+}$. Our aim is to use the theory of determinants of $\bm{\mu}_{n}$-sets to construct a central extension
\begin{equation}\label{mixed-metapl}
1\rightarrow\bm{\mu}_{n}\rightarrow\widetilde{GL}_{m}(K)\rightarrow GL_{m}(K)\rightarrow 1
\end{equation}
analogous to the one constructed by Arbarello-De Concini-Kac \eqref{eqchar-metapl}, and to relate it to the {\it n}-th power residue symbol.

\begin{defin}
Let $A,\ B\subset V$ be commensurable $\mathcal{O}$-modules. We define
\begin{equation*}
(A\mid B)=\det(A/A\cap B)\otimes\det(B/A\cap B)^{\vee}.
\end{equation*}
\end{defin}

\begin{rem}
We stress that in the definition of $(A \mid B)$ the $\mathcal{O}$-module structure only plays a role to guarantee that the quotients $A/A \cap B$ and $B/A \cap B$ are \emph{free} pointed $\bm{\mu}_n$-sets, thanks to Lemma \ref{O-mod=free}. Given freeness, only the $\bm{\mu}_n$-action is used to obtain the determinants which appear in the definition of $(A\mid B)$. 
\end{rem}

\subsection{Elementary properties.}\label{elem-prop} The symbols $A, \ B$ will denote arbitrary commensurable $\mathcal{O}$-submodules of $V$. We will denote $\{*\}\coprod \bm{\mu}_{n}$ by $\bm{\mu}_{n}^{*}$ and we will use repeatedly the fact that it is canonically isomorphic to its dual, and that $X\otimes\bm{\mu}_{n}^{*}$ is canonically isomorphic to $X$ for every $X$ such that $\tilde{X}$ is a $\bm{\mu}_{n}$-line. We will also denote canonical isomorphisms by an equal sign.

The symbol $(A \mid B)$ enjoys the following properties:
\begin{enumerate}
\item $(A \mid A)=\bm{\mu}_{n}^{*}$.
\item If $B\subset A$ then $(A\mid B)=\det(A/B)$.
\item If $B\supset A$ then $(A\mid B)=\det(B/A)^{\vee}$.
\item $(B\mid A)=(A\mid B)^{\vee}$.
\item $(A\mid B)=(A\mid A\cap B)\otimes(A\cap B\mid B)$.
\end{enumerate}

\subsection{Functoriality.} Let $A, B\subset V$ be commensurable $\mathcal{O}$-modules. Every $f\in GL_{m}(K)$ induces an isomorphism
\begin{equation*}
\rho_{f}:(A\mid B)\rightarrow(f(A)\mid f(B)).
\end{equation*}
Indeed, the determinant of the map $f$ induces and isomorphism $\tau_{f}: \det(A/A\cap B)\rightarrow \det(f(A)/f(A)\cap f(B))$, and the map $f^{-1}$ induces, taking the determinant and passing to duals, an isomorphism $\psi_{f}: \det(B/A\cap B)^{\vee}\rightarrow \det(f(B)/f(A)\cap f(B))^{\vee}$. Hence we can set $\rho_{f}=\tau_{f}\otimes\psi_{f}: (A\mid B)\rightarrow(f(A)\mid f(B))$.

\subsection{The contraction isomorphism.} Together with functoriality, the contraction isomorphism is the second key ingredient needed to construct the central extension \eqref{mixed-metapl}. Let $A, B,  C\subset V$ be commensurable $\mathcal{O}$-submodules. There is a canonical isomorphism, called the {\it contraction isomorphism},
\begin{equation}\label{contraction}
\kappa: (A \mid B)\otimes(B\mid C)\rightarrow(A\mid C).
\end{equation}
The isomorphism $\kappa$ was first constructed in \cite{adck89} in the function field setting; the construction only rests on basic formal properties of the (usual) determinant, the most important of which is given in our situation by Corollary \ref{det-key}. We will reproduce the argument in \cite{adck89} for the reader's convenience; integers in parentheses will refer to the elementary properties listed in \ref{elem-prop}.

\subsubsection{Case $A=C$.}\label{cont.1} In this case $(A\mid A)=\bm{\mu}_{n}^{*}$ by (1) and $(A\mid B)=(B\mid A)^{\vee}$ by (4). The map $\kappa$ is the duality isomorphism \eqref{dualiso}.

\subsubsection{Case $A \supset B \supset C$.}\label{cont.2} We have an exact sequence
\begin{equation*}
0\rightarrow B/C\rightarrow A/C\rightarrow A/B\rightarrow 0
\end{equation*}
hence a canonical isomorphism $\det(A/B)\otimes\det(B/C)\rightarrow\det(A/C)$ , which gives us the desired contraction isomorphism in view of (2).

\subsubsection{Case $A\subset B\subset C$.}\label{cont.3} In this case the natural exact sequence yields an isomorphism $\det(B/A)\otimes \det(C/B)\rightarrow\det(C/A)$, whose dual gives the inverse of $\kappa$.

\subsubsection{General case.} As already mentioned, the following computation is borrowed from \cite{adck89}.

\begin{align*}
(A\mid B)\otimes(B\mid C)\stackrel{(5)}{=}&((A\mid A\cap B)\otimes(A\cap B\mid B))\otimes((B\mid B\cap C)\otimes(B\cap C\mid C))\\
\stackrel{\ref{cont.1}}{=}&(A\mid A\cap B)\otimes(A\cap B\mid A\cap B\cap C)\otimes(A\cap B\cap C\mid A\cap B)\otimes(A\cap B\mid B)\\
\otimes & (B\mid B\cap C)\otimes(B\cap C\mid A\cap B\cap C)\otimes(A\cap B\cap C\mid B\cap C)\otimes(B\cap C\mid C)\\
\stackrel{\ref{cont.2}, \ref{cont.3}}{=}&(A\mid A\cap B\cap C)\otimes(A\cap B\cap C\mid B)\otimes(B\mid A\cap B\cap C)\otimes(A\cap B\cap C\mid C)\\
\stackrel{\ref{cont.1}}{=}&(A \mid A\cap B\cap C)\otimes(A\cap B\cap C\mid C)\\
\stackrel{\ref{cont.2}, \ref{cont.3}}{=}&(A\mid A\cap C)\otimes(A\cap C\mid A\cap B\cap C)\otimes(A\cap B\cap C\mid A\cap C)\otimes(A\cap C\mid C)\\
\stackrel{\ref{cont.1}}{=}&(A\mid A\cap C)\otimes(A\cap C\mid C)\\
\stackrel{(5)}{=}&(A \mid C).
\end{align*}

\subsection{Construction of the extension.} Given $g\in GL_{m}(K)$, the $\mathcal{O}$-modules $V^{+}$ and $g(V^{+})$ are commensurable; furthermore for $f\in GL_{m}(K)$ we have an isomorphism $\rho_{f}: (V^+\mid g(V^{+}))\rightarrow(f(V^{+})\mid fg(V^{+}))$ by functoriality; composing it with the contraction isomorphism we obtain a canonical isomorphism
\begin{equation*}
\iota_{g}^{f}:(V^{+} \mid f(V^{+}))\otimes(V^{+}\mid g(V^{+}))\rightarrow(V^{+}\mid fg(V^{+})).
\end{equation*}
We can now define
\begin{equation*}
\widetilde{GL}_{m}(K)=\{(f,\ s)|f\in GL_{m}(K),\ s\in(V^{+}\mid f(V^{+}))\smallsetminus \{*\}\}
\end{equation*}
with composition law $(f,\ s)\cdot(g,\ t)= (fg,\ \iota_{g}^{f}(s\otimes t))$ making it into a group. We have a group morphism $i: \bm{\mu}_{n}\rightarrow \widetilde{GL}_{m}(K)$ sending $\mu$ to $(Id, \mu)$ and a morphism $p: \widetilde{GL}_{m}(K)\rightarrow GL_{m}(K)$ sending $(f,\ s)$ to $f$; they fit in the sought for central extension
\begin{equation}\label{cent-ext}
1\rightarrow\bm{\mu}_{n} \rightarrow \widetilde{GL}_{m}(K)\rightarrow GL_{m}(K)\rightarrow 1.
\end{equation}

\subsection{The commutator symbol.} Take $f, g\in GL_{m}(K)$ {\it commuting} elements, and choose lifts $\tilde{f},\tilde{g}\in\widetilde{GL}_{m}(K)$ of $f, g$. The commutator $[\tilde{f},\tilde{g}]=\tilde{f}\tilde{g}\tilde{f}^{-1}\tilde{g}^{-1}$ lies in $\bm{\mu}_{n}$ and does not depend on the choice of the lifts. Therefore we can define a symbol
\begin{equation*}
\{f,\ g\}=[\tilde{f},\tilde{g}]\in \bm{\mu}_{n}.
\end{equation*}
The symbol $\{f,\ g\}$ enjoys the following properties:
\begin{enumerate}
\item $\{f,\ g\}=\{g,\ f\}^{-1}$.
\item $\{ff',\ g\}=\{f,\ g\}\{f',\ g\}$.
\item $\{1, g\}=1$ and $\{f^{-1},\ g\}=\{f,\ g\}^{-1}$.
\item If $f, g\in GL_{m}(\mathcal{O})$ then $\{f,\ g\}=1$.
\end{enumerate}
Finally, for $m=1$ we can relate the symbol $\{f,\ g\}$ with the {\it n}-th power residue symbol.

\begin{teo}\label{ext-symb-hilb}
Let $1\rightarrow\bm{\mu}_{n}\rightarrow\tilde{K}^{\times}\rightarrow K^{\times}\rightarrow 1$ be the central extension obtained via the above construction for $m=1$. For every $a,\ b\in K^{\times}$ the following equality holds:
\begin{equation*}
\{a,\ b\}=\left(\frac{a^{v(b)}}{b^{v(a)}}\pmod \pi\right)^{\frac{q-1}{n}}.
\end{equation*}
\end{teo}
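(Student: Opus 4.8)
The plan is to reduce the statement, by the formal properties of the symbol, to a single computation that feeds on Lemma~\ref{trasf.lem}. Since $K^{\times}$ is abelian, $\{a,b\}$ is defined for all $a,b$, and because a commutator of an element with itself is trivial we have $\{c,c\}=1$ for every $c$; in particular $\{\pi,\pi\}=1$. Writing $a=\pi^{v(a)}u$ and $b=\pi^{v(b)}w$ with $u,w\in\mathcal{O}^{\times}$ and expanding by bimultiplicativity and antisymmetry (properties (1)--(3) of the symbol) together with the vanishing on unit pairs (property (4)), one finds $\{a,b\}=\{\pi,w\}^{v(a)}\{u,\pi\}^{v(b)}=\{u,\pi\}^{v(b)}\{w,\pi\}^{-v(a)}$. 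On the other hand $a^{v(b)}/b^{v(a)}\equiv u^{v(b)}w^{-v(a)}\pmod\pi$, so the target expression factors in exactly the same way. Thus the whole theorem reduces to the single identity $\{u,\pi\}=\bar u^{\frac{q-1}{n}}$ for $u\in\mathcal{O}^{\times}$, where $\bar u$ denotes the reduction of $u$ modulo $\pi$.

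To prove this I would compute the commutator of explicit lifts. For $m=1$ we have $V^{+}=\mathcal{O}$ and $f\in K^{\times}$ acts by multiplication, so $f(V^{+})=f\mathcal{O}$. For a unit $u$ we have $u\mathcal{O}=\mathcal{O}$, hence $(V^{+}\mid uV^{+})=\bm{\mu}_{n}^{*}$ by property (1) of~\ref{elem-prop}; for the uniformiser $(V^{+}\mid\pi V^{+})=\det(\mathcal{O}/\pi\mathcal{O})=\det(k)$ by property (2). Choosing the canonical point $1\in\bm{\mu}_{n}^{*}$ and any $t\in\det(k)\smallsetminus\{*\}$ gives lifts $\tilde u=(u,1)$ and $\tilde\pi=(\pi,t)$, and I would read off $\{u,\pi\}$ as the unique $\mu\in\bm{\mu}_{n}$ with $\tilde u\,\tilde\pi=i(\mu)\,\tilde\pi\,\tilde u$, which then equals the commutator $[\tilde u,\tilde\pi]$ because $i(\mu)$ is central.

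The heart of the argument is the evaluation of the two products. We have $\tilde u\,\tilde\pi=(u\pi,\iota_{\pi}^{u}(1\otimes t))$ with $\iota_{\pi}^{u}(1\otimes t)=\kappa(1\otimes\rho_{u}(t))$. By the definition of the functoriality isomorphism, $\rho_{u}$ acts on $(V^{+}\mid\pi V^{+})=\det(\mathcal{O}/\pi\mathcal{O})$ as the determinant of the $\bm{\mu}_{n}$-automorphism that multiplication by $u$ induces on $\mathcal{O}/\pi\mathcal{O}=k$, namely multiplication by $\bar u$; by Lemma~\ref{trasf.lem} this determinant is $\bar u^{\frac{q-1}{n}}$, so $\rho_{u}(t)=\bar u^{\frac{q-1}{n}}t$. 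Here the relevant contraction is the degenerate case of~\ref{cont.2} in which two of the three modules coincide, so $\kappa$ is the canonical trivialisation and $\iota_{\pi}^{u}(1\otimes t)=\bar u^{\frac{q-1}{n}}t$. Symmetrically, $\tilde\pi\,\tilde u=(\pi u,\iota_{u}^{\pi}(t\otimes 1))$, where $\rho_{\pi}$ acts as the identity on the trivial torsor $(V^{+}\mid uV^{+})=\bm{\mu}_{n}^{*}$ (being the determinant of multiplication by $\pi$ on the zero module) and $\kappa$ is again the canonical trivialisation coming from~\ref{cont.2}, giving $\iota_{u}^{\pi}(t\otimes 1)=t$. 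Hence $\tilde u\,\tilde\pi=i(\bar u^{\frac{q-1}{n}})\,\tilde\pi\,\tilde u$, so $\{u,\pi\}=\bar u^{\frac{q-1}{n}}$, and the theorem follows.

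The only genuinely non-formal step is the identification of $\rho_{u}$ on $\det(\mathcal{O}/\pi\mathcal{O})$ with the $\bm{\mu}_{n}$-determinant supplied by Lemma~\ref{trasf.lem}; this requires unwinding the definition of $\rho_{f}$ through the determinant functor and is where the arithmetic input $\bar u\mapsto\bar u^{\frac{q-1}{n}}$ enters. The remaining bookkeeping is the verification that the two occurrences of $\kappa$ are literally the canonical trivialisations, so that no spurious root of unity is introduced. I expect this bookkeeping, rather than any conceptual difficulty, to be the main thing to get right; note in particular that the braiding on $\bm{\mu}_{n}$-lines carries no sign, which is consistent with the absence of the factor $(-1)^{v(a)v(b)}$ present in the classical tame symbol and with the fact that $\{\pi,\pi\}$ vanishes simply by being a self-commutator.
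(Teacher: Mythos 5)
Your proposal is correct and follows essentially the same route as the paper: reduce via the formal properties (bimultiplicativity, antisymmetry, triviality of self-commutators and of $\{u,w\}$ for units) to the single identity $\{u,\pi\}=\bar u^{\frac{q-1}{n}}$, then compute the commutator of explicit lifts by identifying $\rho_u$ on $\det(\mathcal{O}/\pi\mathcal{O})$ with multiplication by $\bar u^{\frac{q-1}{n}}$ via Lemma~\ref{trasf.lem} and checking that $\rho_\pi$ and the two contractions contribute nothing. The only cosmetic differences are your choice of the canonical lift $s=1$ (the paper takes an arbitrary $s$, which cancels anyway) and your slightly more streamlined factorisation of the target expression in place of the paper's three-item checklist.
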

\begin{proof}
For $a, b\in K^{\times}$ let us set $\langle a, b\displaystyle \rangle=\left(\frac{a^{v(b)}}{b^{v(a)}}\pmod \pi \right)^{\frac{q-1}{n}}$. Our aim is to check that $\langle a, b\rangle= \{a,\ b\}$. The symbol $\langle\cdot, \cdot\rangle$ satisfies the formal properties (1), (2), (3), (4) of the symbol $\{\cdot, \cdot\}$ listed above. Take $a, b\in K^{\times}$ and write them in the form
\begin{equation*}
a=\pi^{v(a)}u,\ b=\pi^{v(b)}v,\ \ \ u,\ v \in \mathcal{O}^{\times}
\end{equation*}
so that
\begin{equation*}
\{a,\ b\}=\{\pi^{v(a)},\ \pi^{v(b)}\}\{\pi^{v(a)},\ v\}\{u,\ \pi^{v(b)}\}\{u,\ v\}=\{\pi,\ \pi\}^{v(a)v(b)}\{\pi,\ v\}^{v(a)}\{u,\ \pi\}^{v(b)}\{u,\ v\}.
\end{equation*}
The same equality holds with $\langle\cdot, \cdot\rangle$ in place of $\{\cdot, \cdot\}$, hence it suffices to show that
\begin{enumerate}
\item $\{\pi,\ \pi\}=1$.
\item $\{u,\ v\}=1$.
\item $\{u,\ \pi\}=u^{\frac{q-1}{n}}\pmod \pi$. 
\end{enumerate}

The first property holds because, for any lift $\tilde{\pi}$ of $\pi$, we have $\{\pi,\ \pi\}=[\tilde{\pi},\tilde{\pi}]=1$; the second property is a special case of (4) above. It remains to prove that, for every $u\in \mathcal{O}^{\times},$
$$
\{u,\ \pi\}=u^{\frac{q-1}{n}}\ (\mathrm{m}\mathrm{o}\mathrm{d}\ \pi)\ .
$$
Pick $s\in(\mathcal{O}\mid u\mathcal{O})\smallsetminus \{*\}$ and $t\in(\mathcal{O}\mid \pi \mathcal{O})\smallsetminus \{*\}$, and consider the lifts $\tilde{u}=(u,\ s)$ and $\tilde{\pi}=(\pi,\ t)$ of $u$ and $\pi$ respectively. We have $(\mathcal{O}\mid \pi \mathcal{O})=\det(\mathcal{O}/(\pi)),\ (\mathcal{O}\mid u\mathcal{O})=\bm{\mu}_{n}^{*}$ and the isomorphism
\begin{equation*}
\iota_{\pi}^{u}\ :\ \bm{\mu}_{n}^{*}\otimes\det(\mathcal{O}/(\pi))\rightarrow\det(\mathcal{O}/(u\pi))
\end{equation*}
sends $s\otimes t$ to $\det(u)st$, where $\det(u)$ denotes the determinant of the multiplication by $u$ map on $\mathcal{O}/\pi$.

On the other hand the map $\rho_{\pi}: (\mathcal{O}\mid u\mathcal{O})\rightarrow(\pi \mathcal{O}\mid \pi u\mathcal{O})$ is the identity, therefore $\iota_{u}^{\pi}$ sends $t\otimes s$ to $st$. Therefore we obtain $[\tilde{u},\tilde{\pi}]=\det(u)=u^{\frac{q-1}{n}}\pmod \pi$, where the last equality follows from Lemma \ref{trasf.lem}.
\end{proof}

\subsection{Relative dimension.} The symbol $\{\cdot, \cdot\}$ obtained via the central extension \eqref{cent-ext} for $m=1$ equals the Hilbert symbol only up to sign. Examining the above proof one sees that the issue is that $\{\pi,\ \pi\}$ always equals 1. This is inevitable with our definition, because $\{a,\ a\}=1$ for every $a\in K^{\times}$, as any lift of $a$ commutes with itself. For the same reason, this discrepancy also appears in \cite{adck89} and \cite{pr02}. In \cite{adck89} the authors modify their symbol, which they denote by $\{g,\ h\}_{A}^{V}$, by a sign taking into account the ``relative dimension'' of $A$ and its image via $f$ and $g$ (where $A$ is the counterpart of our $V^{+}$). They then show that such a modification of the symbol makes it equal to the tame symbol \cite[Lemma 6.1]{adck89}. We will end this document explaining how the notion of dimension of a finite free pointed $\bm{\mu}_{n}$-set, introduced in \cite{ks}, can be used to modify our symbol $\{\cdot, \cdot\}$ in a way similar to \cite{adck89}; if $q$ is odd, we will show that the resulting symbol equals the {\it n}-th power residue symbol.

\subsection{} Let $(X,\ *)$ be a finite free pointed $\bm{\mu}_{n}$-set; the {\it dimension} of $X$, denoted by $\dim X$, is the number of $\bm{\mu}_{n}$ orbits in $X\smallsetminus \{*\}$.

As in \ref{fixdata}, let $V^{+}=\mathcal{O}^{m}$, and let $A, B\subset V=K^m$ be commensurable $\mathcal{O}$-modules. Fix a positive integer $n \mid q-1$ and consider $A, B$ as $\bm{\mu}_{n}$-sets. We define
\begin{equation*}
[A\mid B]=\dim(A/A\cap B)-\dim(B/A\cap B).
\end{equation*}
The following is an additive analogue of Corollary \ref{det-key}.

\begin{lem}\label{dim-exsq}
Let $0\rightarrow X\rightarrow Y\rightarrow Z\rightarrow 0$ be an exact sequence of finite $\mathcal{O}$-modules. Let $m$ be a divisor of $q-1$. Then $\dim(X)+\dim(Z)\equiv\dim(Y)\pmod m$.
\end{lem}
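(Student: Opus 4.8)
The plan is to reduce the congruence to a cardinality count, since for these modules the dimension is simply controlled by the order of the group. First I would record the basic observation that, for any finite free pointed $\bm{\mu}_{n}$-set $W$ (in particular for a finite $\mathcal{O}$-module, by Lemma \ref{O-mod=free}), the $\bm{\mu}_{n}$-action on $W\smallsetminus\{*\}$ is free, so every orbit has exactly $n$ elements. Consequently the number of orbits is $\dim W=(|W|-1)/n$; in particular $|W|\equiv 1\pmod n$. This converts every occurrence of $\dim$ into an elementary function of the group order.

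Next I would exploit the exact sequence. Since $0\rightarrow X\rightarrow Y\rightarrow Z\rightarrow 0$ is a short exact sequence of finite abelian groups we have $|Y|=|X|\cdot|Z|$. Substituting the formula above and simplifying the numerator gives
\begin{equation*}
\dim(X)+\dim(Z)-\dim(Y)=\frac{(|X|-1)+(|Z|-1)-(|X|\,|Z|-1)}{n}=-\frac{(|X|-1)(|Z|-1)}{n},
\end{equation*}
using the identity $|X|+|Z|-|X|\,|Z|-1=-(|X|-1)(|Z|-1)$. The whole statement therefore comes down to proving that $m$ divides $(|X|-1)(|Z|-1)/n$.

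To finish I would use the $\mathcal{O}$-module structure, which up to this point has only guaranteed freeness. A finite $\mathcal{O}$-module has finite length, and each composition factor is isomorphic to $k=\mathcal{O}/\pi$, of cardinality $q$; hence $|X|$ and $|Z|$ are powers of $q$. Since $q\equiv 1\pmod{q-1}$, this gives $|X|\equiv|Z|\equiv 1\pmod{q-1}$, so both $|X|-1$ and $|Z|-1$ are divisible by $q-1$, and in particular by $n$ (as $n\mid q-1$). Thus $(|X|-1)/n$ is an integer, while $|Z|-1$ is divisible by $q-1$ and hence by $m$; writing $(|X|-1)(|Z|-1)/n=\bigl((|X|-1)/n\bigr)(|Z|-1)$ shows the product is divisible by $m$, which yields $\dim(X)+\dim(Z)\equiv\dim(Y)\pmod m$.

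The argument is essentially formal once the dictionary $\dim W=(|W|-1)/n$ is in place; the only genuinely arithmetic input — and the single step that deserves care — is the passage from freeness of the $\bm{\mu}_{n}$-action (which only gives $|W|\equiv 1\pmod n$) to the stronger congruence $|W|\equiv 1\pmod{q-1}$, which requires the observation that finite $\mathcal{O}$-modules have $q$-power order. I expect this to be the main, though mild, obstacle, since it is precisely the point where the hypothesis that $X,Y,Z$ are $\mathcal{O}$-modules (rather than arbitrary free pointed $\bm{\mu}_{n}$-sets) is used.
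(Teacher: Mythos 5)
Your proof is correct and takes essentially the same route as the paper's: both convert $\dim$ into $(\text{cardinality}-1)/n$, use that finite $\mathcal{O}$-modules have $q$-power order (so cardinalities are $\equiv 1 \pmod{q-1}$, hence $\pmod m$) together with multiplicativity of cardinality in the exact sequence, and finish by elementary modular arithmetic. The only cosmetic difference is bookkeeping: the paper computes each dimension modulo $m$ via the factorization $q^{y}-1=(q-1)(1+q+\cdots+q^{y-1})$ and additivity of length, whereas you factor the defect exactly as $-(|X|-1)(|Z|-1)/n$ and verify its divisibility by $m$ directly.
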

\begin{proof}
Let $q^{x}$ (resp. $q^{y}$, resp. $q^{z}$) be the cardinality of $X$ (resp. $Y$, resp. $Z$). Then we find
\begin{equation*}
\dim(Y)=\frac{q^{y}-1}{n}=\frac{q-1}{n}(1+q+\ldots +q^{y-1})\equiv y\frac{q-1}{n}\pmod m.
\end{equation*}
where the congruence holds because $q\equiv 1 \pmod m$ . The same formula holds for the dimension of $X$ and $Z$, hence the lemma follows from the fact that $y=x+z$.
\end{proof}

\subsection{} Let $\{\cdot, \cdot\}: K^{\times}\times K^{\times}\rightarrow \bm{\mu}_{n}$ be the symbol considered in Theorem \ref{ext-symb-hilb}. Inspired by \cite{adck89} we define, for $a,\ b\in K^{\times}$:
\begin{equation*}
\langle a,\ b\rangle=(-1)^{[\mathcal{O}\mid a\mathcal{O}][\mathcal{O}\mid b\mathcal{O}]}\{a,\ b\}.
\end{equation*}

\begin{corol}
If $q$ is odd then for every $a, b\in K^{\times}$
\begin{equation*}
\langle a,\ b\rangle=\left((-1)^{v(a)v(b)}\frac{a^{v(b)}}{b^{v(a)}}\pmod \pi\right)^{\frac{q-1}{n}}.
\end{equation*}
\end{corol}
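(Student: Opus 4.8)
The plan is to reduce the statement to a single congruence modulo $2$ and then to evaluate that congruence using Lemma \ref{dim-exsq}. First I would rewrite the target symbol. Since raising to the $\frac{q-1}{n}$-th power is a homomorphism $k^{\times}\to\bm{\mu}_{n}$, and since $\left((-1)^{v(a)v(b)}\pmod{\pi}\right)^{\frac{q-1}{n}}=(-1)^{v(a)v(b)\frac{q-1}{n}}$, the right-hand side factors as $(-1)^{v(a)v(b)\frac{q-1}{n}}\cdot\left(\frac{a^{v(b)}}{b^{v(a)}}\pmod{\pi}\right)^{\frac{q-1}{n}}$. By Theorem \ref{ext-symb-hilb} the second factor is exactly $\{a,b\}$, so the right-hand side equals $(-1)^{v(a)v(b)\frac{q-1}{n}}\{a,b\}$. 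Comparing with the definition $\langle a,b\rangle=(-1)^{[\mathcal{O}\mid a\mathcal{O}][\mathcal{O}\mid b\mathcal{O}]}\{a,b\}$, the whole statement collapses to the sign identity $(-1)^{[\mathcal{O}\mid a\mathcal{O}][\mathcal{O}\mid b\mathcal{O}]}=(-1)^{v(a)v(b)\frac{q-1}{n}}$, i.e.\ to the congruence $[\mathcal{O}\mid a\mathcal{O}]\,[\mathcal{O}\mid b\mathcal{O}]\equiv v(a)v(b)\frac{q-1}{n}\pmod{2}$.

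Second, I would establish the one-variable congruence $[\mathcal{O}\mid a\mathcal{O}]\equiv v(a)\frac{q-1}{n}\pmod{2}$. Writing $a=\pi^{v(a)}u$ with $u\in\mathcal{O}^{\times}$ gives $a\mathcal{O}=\pi^{v(a)}\mathcal{O}$, so one of the two quotients defining $[\mathcal{O}\mid a\mathcal{O}]$ is trivial and the other is a finite $\mathcal{O}$-module of cardinality $q^{|v(a)|}$ (namely $\mathcal{O}/\pi^{v(a)}\mathcal{O}$ if $v(a)\geq 0$, and $\pi^{v(a)}\mathcal{O}/\mathcal{O}$ entering with a minus sign if $v(a)<0$). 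Here is the only point where $q$ odd is used: it guarantees $2\mid q-1$, so that I may take $m=2$ in Lemma \ref{dim-exsq}. The computation in its proof shows that a finite $\mathcal{O}$-module of cardinality $q^{\ell}$ has dimension $\equiv \ell\frac{q-1}{n}\pmod{2}$; applying this with $\ell=|v(a)|$, and noting that signs are irrelevant modulo $2$, yields the claimed congruence.

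Finally I would multiply the congruences for $a$ and for $b$ to obtain $[\mathcal{O}\mid a\mathcal{O}]\,[\mathcal{O}\mid b\mathcal{O}]\equiv v(a)v(b)\left(\frac{q-1}{n}\right)^{2}\pmod{2}$, and invoke the elementary identity $x^{2}\equiv x\pmod{2}$, valid for every integer $x$, to replace $\left(\frac{q-1}{n}\right)^{2}$ by $\frac{q-1}{n}$. This is precisely the congruence isolated in the first step, which finishes the proof.

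I do not anticipate a genuine obstacle: the argument is a short reduction followed by the already-available dimension computation of Lemma \ref{dim-exsq}. The only place demanding care is the sign bookkeeping in the case $v(a)<0$, but this is harmless since every quantity is ultimately read modulo $2$; when writing the details one should also check that the factor $(-1)^{[\mathcal{O}\mid a\mathcal{O}][\mathcal{O}\mid b\mathcal{O}]}$ really lands in $\bm{\mu}_{n}$, which it does, since a nontrivial sign forces $\frac{q-1}{n}$ to be odd, hence $n$ even and $-1\in\bm{\mu}_{n}$.
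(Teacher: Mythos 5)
Your proof is correct and follows essentially the same route as the paper: both reduce the statement, via Theorem \ref{ext-symb-hilb}, to the congruence $[\mathcal{O}\mid a\mathcal{O}]\,[\mathcal{O}\mid b\mathcal{O}]\equiv \frac{q-1}{n}v(a)v(b)\pmod 2$, and deduce it from the one-variable congruence $[\mathcal{O}\mid a\mathcal{O}]\equiv\frac{q-1}{n}v(a)\pmod 2$ together with $x^{2}\equiv x\pmod 2$. The only immaterial difference is that the paper establishes the one-variable congruence by induction on $v(a)$ using the statement of Lemma \ref{dim-exsq}, whereas you evaluate $\dim(\mathcal{O}/\pi^{v(a)}\mathcal{O})$ directly from the cardinality formula appearing inside that lemma's proof.
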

\begin{proof}
For every $a\in K^{\times}$ we have
\begin{equation}
[\mathcal{O}\mid a\mathcal{O}]\equiv\frac{q-1}{n}v(a)\pmod 2.
\end{equation}
Indeed it suffices to check this for $v(a)$ positive; if $v(a)=1$ the congruence holds true by definition, and the general case follows by induction using Lemma \ref{dim-exsq} with $m=2$.

Now, for $a, b\in K^{\times}$ we find
\begin{equation*}
[\mathcal{O}\mid a\mathcal{O}][\mathcal{O}\mid b\mathcal{O}]\equiv\frac{q-1}{n}v(a)\frac{q-1}{n}v(b)\equiv\frac{q-1}{n}v(a)v(b)\pmod 2
\end{equation*}
hence the result follows from Theorem \ref{ext-symb-hilb}.
\end{proof}

\bibliographystyle{amsalpha}
\bibliography{bibKhilb}

\providecommand{\bysame}{\leavevmode\hbox to3em{\hrulefill}\thinspace}
\providecommand{\MR}{\relax\ifhmode\unskip\space\fi MR }
\providecommand{\MRhref}[2]{%
  \href{http://www.ams.org/mathscinet-getitem?mr=#1}{#2}
}
\providecommand{\href}[2]{#2}
\begin{thebibliography}{ADCK89}

\bibitem[ADCK89]{adck89}
Enrico Arbarello, Corrado De~Concini, and Victor~G. Kac, \emph{The infinite
  wedge representation and the reciprocity law for algebraic curves}, Theta
  functions, Part 1, Proc. Sympos. Pure Math., vol.~49, Amer. Math. Soc.,
  Providence, RI, 1989, p.~171–190.

\bibitem[APR04]{apr04}
Greg~W. Anderson and Fernando Pablos~Romo, \emph{Simple proofs of classical
  explicit reciprocity laws on curves using determinant groupoids over an
  artinian local ring}, Communications in Algebra \textbf{32} (2004), no.~1,
  79--102.

\bibitem[Cla12]{cla12}
Dustin Clausen, \emph{p-adic {J}-homomorphisms and a product formula}, ArXiv
  e-prints (2012), page arXiv:1110.5851v2.

\bibitem[Cla17]{cla17}
Dustin Clausen, \emph{A {K}-theoretic approach to {A}rtin maps}, ArXiv e-prints
  (2017), page arXiv: 1703.07842v2.

\bibitem[Cla20]{cla20}
Dustin Clause, \emph{Lectures on reciprocity via {K}-theory}.

\bibitem[KS]{ks}
Mikhail Kapranov and Aleksandr~L. Smirnov, \emph{Cohomology determinants and
  reciprocity laws: number field case},
  \url{http://www.neverendingbooks.org/DATA/KapranovSmirnov.pdf}.

\bibitem[PR02]{pr02}
Fernando Pablos~Romo, \emph{On the tame symbol of an algebraic curve},
  Communications in Algebra \textbf{30} (2002), no.~9, 4349--4368.

\bibitem[PR08]{pr08}
\bysame, \emph{Central extensions, symbols and reciprocity laws on {GL}(n,
  {F})}, Pacific Journal of Mathematics \textbf{234} (2008), 137--159.

\bibitem[Wei13]{wei13}
Charles~A. Weibel, \emph{The {K}-book: An introduction to algebraic
  {K}-theory}, Graduate Studies in Math., vol. 145, AMS, 2013.

\end{thebibliography}

\Addr

\end{document}